\numberwithin{equation}{section}
\theoremstyle{plain}
\newtheorem{theorem}{Theorem}[section]
\newtheorem{proposition}[theorem]{Proposition}
\newtheorem{corollary}[theorem]{Corollary}
\theoremstyle{definition}
\newtheorem{remark}[theorem]{Remark}
\begin{document}

\newcommand{\eq}{equation}
\newcommand{\real}{\ensuremath{\mathbb R}}
\newcommand{\comp}{\ensuremath{\mathbb C}}
\newcommand{\rn}{\ensuremath{{\mathbb R}^n}}
\newcommand{\tn}{\ensuremath{{\mathbb T}^n}}
\newcommand{\rnp}{\ensuremath{\real^n_+}}
\newcommand{\rnn}{\ensuremath{\real^n_-}}
\newcommand{\Rn}{\ensuremath{{\mathbb R}^{n-1}}}
\newcommand{\Zn}{\ensuremath{{\mathbb Z}^{n-1}}}
\newcommand{\no}{\ensuremath{\nat_0}}
\newcommand{\ganz}{\ensuremath{\mathbb Z}}
\newcommand{\zn}{\ensuremath{{\mathbb Z}^n}}
\newcommand{\zom}{\ensuremath{{\mathbb Z}_{\Om}}}
\newcommand{\zOm}{\ensuremath{{\mathbb Z}^{\Om}}}
\newcommand{\As}{\ensuremath{A^s_{p,q}}}
\newcommand{\Bs}{\ensuremath{B^s_{p,q}}}
\newcommand{\Fs}{\ensuremath{F^s_{p,q}}}
\newcommand{\Fsr}{\ensuremath{F^{s,\rloc}_{p,q}}}
\newcommand{\nat}{\ensuremath{\mathbb N}}
\newcommand{\Om}{\ensuremath{\Omega}}
\newcommand{\di}{\ensuremath{{\mathrm d}}}
\newcommand{\sn}{\ensuremath{{\mathbb S}^{n-1}}}
\newcommand{\Ac}{\ensuremath{\mathcal A}}
\newcommand{\Acs}{\ensuremath{\Ac^s_{p,q}}}
\newcommand{\Bc}{\ensuremath{\mathcal B}}
\newcommand{\Cc}{\ensuremath{\mathcal C}}
\newcommand{\Ec}{\ensuremath{\mathcal E}}
\newcommand{\Nc}{\ensuremath{\mathcal N}}
\newcommand{\cc}{{\scriptsize $\Cc$}${}^s (\rn)$}
\newcommand{\ccd}{{\scriptsize $\Cc$}${}^s (\rn, \delta)$}
\newcommand{\Fc}{\ensuremath{\mathcal F}}
\newcommand{\Lc}{\ensuremath{\mathcal L}}
\newcommand{\Mc}{\ensuremath{\mathcal M}}
\newcommand{\Pc}{\ensuremath{\mathcal P}}
\newcommand{\Efr}{\ensuremath{\mathfrak E}}
\newcommand{\Mfr}{\ensuremath{\mathfrak M}}
\newcommand{\Mbf}{\ensuremath{\mathbf M}}
\newcommand{\Dbb}{\ensuremath{\mathbb D}}
\newcommand{\Lbb}{\ensuremath{\mathbb L}}
\newcommand{\Pbb}{\ensuremath{\mathbb P}}
\newcommand{\Qbb}{\ensuremath{\mathbb Q}}
\newcommand{\Rbb}{\ensuremath{\mathbb R}}
\newcommand{\vp}{\ensuremath{\varphi}}
\newcommand{\hra}{\ensuremath{\hookrightarrow}}
\newcommand{\supp}{\ensuremath{\mathrm{supp \,}}}
\newcommand{\ssupp}{\ensuremath{\mathrm{sing \ supp\,}}}
\newcommand{\dist}{\ensuremath{\mathrm{dist \,}}}
\newcommand{\unif}{\ensuremath{\mathrm{unif}}}
\newcommand{\ve}{\ensuremath{\varepsilon}}
\newcommand{\vk}{\ensuremath{\varkappa}}
\newcommand{\vr}{\ensuremath{\varrho}}
\newcommand{\pa}{\ensuremath{\partial}}
\newcommand{\oa}{\ensuremath{\overline{a}}}
\newcommand{\ob}{\ensuremath{\overline{b}}}
\newcommand{\of}{\ensuremath{\overline{f}}}
\newcommand{\LA}{\ensuremath{L^r\!\As}}
\newcommand{\LcA}{\ensuremath{\Lc^{r}\!A^s_{p,q}}}
\newcommand{\LcdA}{\ensuremath{\Lc^{r}\!A^{s+d}_{p,q}}}
\newcommand{\LcB}{\ensuremath{\Lc^{r}\!B^s_{p,q}}}
\newcommand{\LcF}{\ensuremath{\Lc^{r}\!F^s_{p,q}}}
\newcommand{\Lf}{\ensuremath{L^r\!f^s_{p,q}}}
\newcommand{\La}{\ensuremath{\Lambda}}
\newcommand{\Lob}{\ensuremath{L^r \ob{}^s_{p,q}}}
\newcommand{\Lof}{\ensuremath{L^r \of{}^s_{p,q}}}
\newcommand{\Loa}{\ensuremath{L^r\, \oa{}^s_{p,q}}}
\newcommand{\Lcoa}{\ensuremath{\Lc^{r}\oa{}^s_{p,q}}}
\newcommand{\Lcob}{\ensuremath{\Lc^{r}\ob{}^s_{p,q}}}
\newcommand{\Lcof}{\ensuremath{\Lc^{r}\of{}^s_{p,q}}}
\newcommand{\Lca}{\ensuremath{\Lc^{r}\!a^s_{p,q}}}
\newcommand{\Lcb}{\ensuremath{\Lc^{r}\!b^s_{p,q}}}
\newcommand{\Lcf}{\ensuremath{\Lc^{r}\!f^s_{p,q}}}
\newcommand{\id}{\ensuremath{\mathrm{id}}}
\newcommand{\range}{\ensuremath{\mathrm{range \,}}}
\newcommand{\rank}{\ensuremath{\mathrm{rank \,}}}
\newcommand{\tr}{\ensuremath{\mathrm{tr\,}}}
\newcommand{\trd}{\ensuremath{\mathrm{tr}_d}}
\newcommand{\trL}{\ensuremath{\mathrm{tr}_L}}
\newcommand{\ext}{\ensuremath{\mathrm{ext}}}
\newcommand{\re}{\ensuremath{\mathrm{re\,}}}
\newcommand{\Rea}{\ensuremath{\mathrm{Re\,}}}
\newcommand{\Ima}{\ensuremath{\mathrm{Im\,}}}
\newcommand{\loc}{\ensuremath{\mathrm{loc}}}
\newcommand{\rloc}{\ensuremath{\mathrm{rloc}}}
\newcommand{\per}{\ensuremath{\mathrm{per}}}
\newcommand{\osc}{\ensuremath{\mathrm{osc}}}
\newcommand{\pr}{\pageref}
\newcommand{\wh}{\ensuremath{\widehat}}
\newcommand{\wt}{\ensuremath{\widetilde}}
\newcommand{\ol}{\ensuremath{\overline}}
\newcommand{\os}{\ensuremath{\overset}}
\newcommand{\Li}{\ensuremath{\overset{\circ}{L}}}
\newcommand{\Ai}{\ensuremath{\os{\, \circ}{A}}}
\newcommand{\Ci}{\ensuremath{\os{\circ}{\Cc}}}
\newcommand{\Si}{\ensuremath{\overset{\circ}{S}}}
\newcommand{\dom}{\ensuremath{\mathrm{dom \,}}}
\newcommand{\SA}{\ensuremath{S^r_{p,q} A}}
\newcommand{\SB}{\ensuremath{S^r_{p,q} B}}
\newcommand{\SF}{\ensuremath{S^r_{p,q} F}}
\newcommand{\Hc}{\ensuremath{\mathcal H}}
\newcommand{\Lci}{\ensuremath{\overset{\circ}{\Lc}}}
\newcommand{\bmo}{\ensuremath{\mathrm{bmo}}}
\newcommand{\BMO}{\ensuremath{\mathrm{BMO}}}
\newcommand{\cm}{\\[0.1cm]}
\newcommand{\Aa}{\ensuremath{\os{\, \ast}{A}}}
\newcommand{\Ba}{\ensuremath{\os{\, \ast}{B}}}
\newcommand{\Fa}{\ensuremath{\os{\, \ast}{F}}}
\newcommand{\Ha}{\ensuremath{\os{\, \ast}{H}}}
\newcommand{\Aas}{\ensuremath{\Aa{}^s_{p,q}}}
\newcommand{\Bas}{\ensuremath{\Ba{}^s_{p,q}}}
\newcommand{\Fas}{\ensuremath{\Fa{}^s_{p,q}}}
\newcommand{\Ca}{\ensuremath{\os{\, \ast}{{\mathcal C}}}}
\newcommand{\Cas}{\ensuremath{\Ca{}^s}}
\newcommand{\Car}{\ensuremath{\Ca{}^r}}
\newcommand{\bl}{$\blacksquare$}

\begin{center}
{\Large Poisson summation formulas: A new approach}
\\[1cm]
{Hans Triebel}
\\[0.2cm]
Institut f\"{u}r Mathematik\\
Friedrich--Schiller--Universit\"{a}t Jena\\
07737 Jena, Germany
\\[0.1cm]
email: hans.triebel@uni-jena.de
\end{center}

\begin{abstract}
The paper deals with a new approach to Poisson summation formulas in the context of function spaces on \rn.
\end{abstract}

{\bfseries Keywords:} Fourier transform, Poisson summation formulas

{\bfseries 2020 MSC:} Primary 46E35, Secondary 42B35

\section{Introduction}   \label{S1}
Let $\vp \in S(\rn)$. Then
\begin{\eq}  \label{1.1}
\sum_{k\in \zn} \vp(2\pi k) = (2\pi)^{-n/2} \sum_{k \in \zn} \wh{\vp} (k)
\end{\eq}
is the classical Poisson summation formula, where as usual
\begin{\eq}   \label{1.2}
\wh{\vp} (\xi) = (2\pi )^{-n/2} \int_{\rn} e^{-ix \xi} \vp(x) \, \di x, \qquad \xi \in \rn,
\end{\eq}
is the Fourier transform of \vp. This relation has been modified and generalized in many directions. There are numerous applications
in analytic number theory and physics, in particular phase--frequency assertions for crystals and quasi--crystals, up to our time. We 
refer the reader to \cite{LeO15, LeO17, Mey16, Mey17}, the recent papers \cite{Gon23, GoV25} and the references within. We offer a new 
approach, based on the observation
\begin{\eq}   \label{1.3}
\sum_{k \in \zn}  e^{i2\pi kx} = \sum_{k\in \zn} \delta_k \in \Cc^{-n} (\rn) \hra S'(\rn).
\end{\eq}
Here $\delta_k$ are the classical Dirac distributions, $\delta_k (\vp) = \vp (k)$, $k \in \zn$, $\vp \in S(\rn)$, whereas $e^{i2\pi kx}
\in S'(\rn)$ must be considered as related building blocks in the framework of the dual pairing $\big( S(\rn), S'(\rn) \big)$ with
$kx = \sum^n_{j=1} k_j x_j$, $k \in \zn$, $x\in \rn$. This identity in $S'(\rn)$ can be inserted in distinguished operators. The 
outcome are generalized Poisson summation formulas. Secondly one can first apply standard procedures in the theory of function spaces
in $\rn$ to \eqref{1.3}, for example diffeomorphisms, to produce more general versions and to proceed afterwards as indicated.

In Section \ref{S2} we justify \eqref{1.3}. This will be applied and modified in Section \ref{S3} as outlined above. 

\section{Preliminaries and main assertions}   \label{S2}
\subsection{Preliminaries}   \label{S2.1}
First we collect some notation. Let $\nat$ be the collection of all natural numbers and $\no = \nat \cup \{0 \}$. Let $\rn$ be Euclidean 
$n$--space, where $n \in \nat$. Put $\real = \real^1$ whereas $\comp$ is the complex plane. As usual $\ganz$ is the collection of all
integers, and $\zn$, where $n\in \nat$, denotes the lattice of all points $m = (m_1, \ldots, m_n) \in \rn$ with $m_j \in \ganz$, $j=
1, \ldots,n$. Let $\nat^n_0$, where $n\in \nat$, be the set of all multi--indices, $\alpha = (\alpha_1, \ldots, \alpha_n)$ with
$\alpha_j \in \no$ and $|\alpha| = \sum^n_{j=1} \alpha_j$. As usual, derivatives are abbreviated by
\begin{\eq}    \label{2.1}
D^\alpha = \frac{\pa^{|\alpha|}}{\pa x^{\alpha_1}_1 \ldots \pa x^{\alpha_n}_n}, \qquad \alpha \in \nat^n_0, \quad x \in \rn,
\end{\eq}
and similarly
\begin{\eq}   \label{2.2}
\xi^\alpha = \xi^{\alpha_1}_1 \cdots \xi^{\alpha_n}_n, \qquad \alpha \in \nat^n_0, \quad \xi \in \rn.
\end{\eq}
For $x\in \rn$, $y\in \rn$, let
\begin{\eq}   \label{2.3}
xy = \sum^n_{j=1} x_j y_j, \qquad x= (x_1, \ldots, x_n), \quad y = (y_1, \ldots, y_n),
\end{\eq}
and $|x| = \sqrt{xx}$.

Let $S(\rn)$ be the usual Schwartz space of all rapidly decreasing, infinitely differentiable functions on $\rn$ and let $S'(\rn)$ be its dual, the space of tempered distributions. Let $\vp \in S(\rn)$. Then
\begin{\eq}   \label{2.4}
\wh{\vp} (\xi) = \big(F \vp \big)(\xi) = (2\pi)^{-n/2} \int_{\rn}  e^{-ix \xi} \, \vp (x) \, \di x, \qquad \xi \in \rn,
\end{\eq}
denotes the Fourier transform of \vp, and
\begin{\eq}   \label{2.5}
\vp^\vee (\xi) = \big( F^{-1} \vp \big)(\xi) =(2 \pi)^{-n/2} \int_{\rn} e^{i x \xi} \vp (x) \, \di x,\qquad \xi \in \rn,
\end{\eq}
the inverse Fourier transform of \vp. This is extended in the standard way to $S'(\rn)$ by
\begin{\eq}   \label{2.5a}
(FT)(\vp) = \wh{T} (\vp) = T(\wh{\vp} ) \quad \text{and} \quad (F^{-1}T )(\vp) =  T^\vee (\vp) = T (\vp^\vee)
\end{\eq}
for $\vp \in S(\rn)$ and $T\in S'(\rn)$.

We need the periodic counterpart of the above Fourier transform in the $n$--torus $\tn$,
\begin{\eq}   \label{2.5b}
\tn = \{ x= (x_1, \ldots, x_n) \in \rn: \ -\pi \le x_j \le \pi, \ j =1, \ldots, n \}, \quad n \in \nat,
\end{\eq}
identified with the indicated cube in \rn. Then
\begin{\eq}   \label{2.6}
f(x) = (2 \pi)^{-n/2}   \sum_{m \in \zn} a_m e^{imx}, \qquad x\in \tn,
\end{\eq}
with
\begin{\eq}    \label{2.7}
a_m = (2\pi)^{-n/2} \int_{\tn} e^{-imx} f(x) \, \di x, \qquad m\in \zn,
\end{\eq}
is the classical representation of periodic functions  in \tn. 
The related theory of periodic functions $f \in L_1 (\tn)$ can be
extended to the theory of periodic distributions based
on the dual pairing $\big( D(\tn), D' (\tn) \big)$,
\begin{\eq}   \label{2.8}
(f, \vp)_\pi, \qquad f \in D'(\tn), \quad \vp \in D(\tn).
\end{\eq}
This will not be repeated here. We refer the reader to \cite[Chapter 3]{ST87} and \cite[Section 1.3]{T08}. On this basis one can
extend \eqref{2.6} and \eqref{2.7} to
\begin{\eq}   \label{2.9}
f = (2 \pi)^{-n/2}   \sum_{m \in \zn} a_m e^{imx}, \qquad f \in D'( \tn),
\end{\eq}
convergence in $D'(\tn)$, with the Fourier coefficients
\begin{\eq}     \label{2.10}
a_m = (2\pi)^{-n/2} \big(f, e^{-im x} \big)_\pi, \qquad m \in \zn, \quad f \in D'(\tn).
\end{\eq}
One can extend $f\in D'(\tn)$ periodically to \rn,
\begin{\eq}   \label{2.11}
f(\cdot) = f(\cdot + 2\pi k), \qquad k \in \zn,
\end{\eq}
to $S_\pi '(\rn)$, the periodic subspace of $S'(\rn)$. Then one has now
\begin{\eq}    \label{2.12}
f = (2 \pi)^{-n/2}   \sum_{m \in \zn} a_m e^{imx}, \qquad f \in S'_\pi (\rn),
\end{\eq}
instead of \eqref{2.9} with
\begin{\eq}     \label{2.13}
a_m = (2\pi)^{-n/2} \big(f, e^{-im x} \big)_\pi, \qquad m \in \zn.
\end{\eq}
Recall that 
\begin{\eq} \label{2.14}
F \big( e^{imx} \big) = (2 \pi)^{n/2} \delta_m, \quad \text{and} \quad F\delta_m =(2\pi)^{-n/2} e^{-imx}
\end{\eq}
where $\delta_m$ is the $\delta$--distribution  with  $m \in \zn$ as off--point, $\delta_m \vp = \vp(m)$, $\vp \in S(\rn)$. Applied to 
\eqref{2.12} one obtains
\begin{\eq}   \label{2.15}
Ff = \sum_{m\in \zn} a_m \delta_m, \qquad a_m = (2 \pi)^{-n/2} \big( f, e^{-imx} \big)_\pi.
\end{\eq}

\subsection{Main assertions}    \label{S2.2}
First we justify an extended version of the above classical Poisson summation formula \eqref{1.1}.

Let $\vp \in S(\rn)$ and
\begin{\eq}    \label{2.16}
f(x) = \sum_{k\in \zn} \vp(x+ 2\pi k), \qquad x\in \rn.
\end{\eq}
Then $f$ and all its derivatives $D^\alpha f$, $\alpha \in \nat^n_0$, are periodic bounded $C^\infty$ functions in \rn. One has by 
\eqref{2.12}, \eqref{2.13} that
\begin{\eq}   \label{2.17}
f(x) = (2\pi)^{-n/2} \sum_{m \in \zn} \wh{f}_\pi (m) \, e^{imx}
\end{\eq}
with
\begin{\eq}   \label{2.18}
\begin{aligned}
\wh{f}_\pi (m) &= (2\pi)^{-n/2} \big( f, e^{-imx} \big)_\pi \\
&= (2\pi)^{-n/2} \sum_{k \in \zn} \int_{\tn} \vp( x + 2\pi k) \, e^{-imx} \, \di x \\
&= (2\pi)^{-n/2} \sum_{k \in \zn} \int_{\tn + 2\pi k} \vp(x) \, e^{-imx} \, \di x \\
&= \wh{\vp} (m).
\end{aligned}
\end{\eq}
Inserted in \eqref{2.17} one obtains 
\begin{\eq}   \label{2.19}
\sum_{k \in \zn} \vp (x+ 2\pi k) = (2\pi)^{-n/2} \sum_{k \in \zn} \wh{\vp} (k) \, e^{ikx}, \qquad x\in \rn.
\end{\eq}
Then $x=0$ justifies \eqref{1.1}.

After these preparations we can now prove \eqref{1.3} rather quickly. For this purpose we have first to say what is meant by the 
function space $\Cc^s (\rn)$, $s\in \real$. Let $\vp_0 \in S(\rn)$ with
\begin{\eq}   \label{2.20}
\vp_0 (x) =1 \ \text{if $|x| \le 1$} \qquad \text{and} \qquad \vp_0 (x) =0 \ \text{if $|x| \ge 3/2$},
\end{\eq}
and let
\begin{\eq}   \label{2.21}
\vp_k (x) = \vp_0 \big( 2^{-k} x) - \vp_0 \big( 2^{-k+1} x \big), \qquad x \in \rn, \quad k \in \nat.
\end{\eq}
Since
\begin{\eq}    \label{2.22}
\sum^\infty_{j=0} \vp_j (x) = 1 \qquad \text{for} \quad x\in \rn,
\end{\eq}
$\vp = \{\vp_j\}^\infty_{j=0}$ forms a dyadic resolution of unity. The entire analytic functions $(\vp_j \wh{f} )^\vee (x)$ make sense pointwise in $\rn$ for any $f \in S' (\rn)$. Let $s\in \real$. Then $\Cc^s(\rn) = B^s_{\infty, \infty} (\rn) = F^s_{\infty, \infty} (\rn)$ is the collection of all $f\in S'(\rn)$ such that
\begin{\eq}  \label{2.23}
\| f \, | \Cc^s (\rn) \|_\vp = \sup_{j\in \no, x\in \rn} 2^{js} \big| (\vp_j \wh{f} )^\vee (x) \big|
\end{\eq}
is finite. This is a special case of the function spaces $\As (\rn)$, $A\in \{B,F \}$, $s\in \real$ and $0<p,q \le \infty$. One may
consult \cite[Section 1.1.1, pp.~1--5]{T20}. There one finds explanations, discussions and references. It is well known that these
Banach spaces are independent of the chosen resolution of unity $\vp$. This justifies our omission of the subscript $\vp$ in 
\eqref{2.23} in the sequel. For later applications it might be useful to mention that the general assertion
\begin{\eq}   \label{2.24}
\Cc^s (\rn) \hra S'(\rn), \qquad s\in \real,
\end{\eq}
(continuous embedding) can be complemented as follows. If $0<s<1$, then $\Cc^s (\rn) = C^s (\rn)$ are the classical H\"{o}lder spaces,
whereas $\Cc^1 (\rn)$ is the Zygmund class. Furthermore,
\begin{\eq}  \label{2.25}
\Cc^s (\rn) \subset L^{\loc}_1 (\rn), \quad \text{regular distributions, if, and only if, $s>0$},
\end{\eq}
\begin{\eq}   \label{2.26}
\Cc^s (\rn) \hra L_\infty (\rn) \quad \text{if, and  only if, $s>0$},
\end{\eq}
\begin{\eq}   \label{2.27}
\Cc^s (\rn) \hra C(\rn) \quad \text{if, and only if, $s>0$},
\end{\eq}
where $C(\rn)$ is the space of all bounded continuous functions in \rn. These assertions are special cases of 
\cite[Theorems 2.3, 2.4, pp.~22--24]{T20}. Otherwise the convergence of the series below must be understood in the framework of the
dual pairing $\big(S(\rn), S'(\rn) \big)$. Recall that $\delta_a$ is the usual Dirac distribution with the off--point $a\in \rn$, $
\delta_a (\vp) = \vp(a)$, $\vp \in S(\rn)$. Furthermore $e^{iax} \in S'(\rn)$ for fixed $a= (a_1, \ldots, a_n) \in \rn$ must be 
considered as a distribution, say, $e_a$, generated by $e_a (x) = e^{iax}$, $x= (x_1, \ldots, x_n) \in \rn$, with $ax = \sum_{j=1}^n
a_j x_j$. But we stick at the notation $e^{iax}$. In extension of \eqref{2.14} we recall that (in the above interpretation)
\begin{\eq}   \label{2.28}
F (e^{iax}) = (2\pi)^{n/2} \delta_a \quad \text{and} \quad F\delta_a = (2\pi)^{-n/2} e^{-iax}
\end{\eq}
for $a= (a_1, \ldots, a_n) \in \rn$ as fixed off--point. Inserted in \eqref{2.23} it follows that $\delta_a \in \Cc^s (\rn)$ if, and
only if, $s \le -n$. 

\begin{theorem}   \label{T2.1}
Let $n \in \nat$. Then
\begin{\eq}   \label{2.29}
\sum_{k\in \zn} e^{i2 \pi kx} = \sum_{k\in \zn} \delta_k \in \Cc^{-n} (\rn),
\end{\eq}
convergence being in $S'(\rn)$.
\end{theorem}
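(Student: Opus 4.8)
The plan is to derive everything from the classical Poisson summation formula \eqref{1.1}, which is already justified above, together with the continuity of $F^{\pm1}$ on $S'(\rn)$ and a lattice--point count. I would organize the argument into three steps.

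\emph{Step 1: the distributional Poisson formula.} For $\vp\in S(\rn)$ both $\vp$ and $\wh\vp = F\vp$ decay faster than every polynomial, so $\vp\mapsto\sum_{k\in\zn}\vp(2\pi k)$ and $\vp\mapsto\sum_{k\in\zn}(F\vp)(k)$ are continuous functionals on $S(\rn)$; in particular $\sum_{k\in\zn}\delta_k$ and $\sum_{k\in\zn}\delta_{2\pi k}$ converge in $S'(\rn)$. By \eqref{2.5a} one has $\sum_k(F\vp)(k) = \big(F\sum_k\delta_k\big)(\vp)$, so reading \eqref{1.1} as an identity of functionals in $\vp$ (after multiplying by $(2\pi)^{n/2}$) gives
\[
F\Big(\sum_{k\in\zn}\delta_k\Big) = (2\pi)^{n/2}\sum_{k\in\zn}\delta_{2\pi k}\quad\text{in }S'(\rn).
\]
Applying $F^{-1}$ and using \eqref{2.28} with off--points $a = 2\pi k$ yields $\sum_{k\in\zn}\delta_k = (2\pi)^{n/2}\sum_{k\in\zn}F^{-1}\delta_{2\pi k} = \sum_{k\in\zn}e^{i2\pi kx}$, convergence in $S'(\rn)$. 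This is the first equality in \eqref{2.29}; alternatively one could reach it by applying \eqref{2.12}--\eqref{2.15} to the $2\pi$--periodic distribution $\sum_k\delta_{2\pi k}\in S'_\pi(\rn)$ and then dilating by $x\mapsto x/(2\pi)$.

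\emph{Step 2: membership in $\Cc^{-n}(\rn)$.} Put $g = \sum_{k\in\zn}\delta_k$, so $\wh g = (2\pi)^{n/2}\sum_{k\in\zn}\delta_{2\pi k}$ by Step 1. For the dyadic resolution $\{\vp_j\}^\infty_{j=0}$ from \eqref{2.20}--\eqref{2.21} the product $\vp_j\wh g = (2\pi)^{n/2}\sum_{k}\vp_j(2\pi k)\delta_{2\pi k}$ is a \emph{finite} sum, since $\supp\vp_j$ is compact, and \eqref{2.28} gives the trigonometric polynomial
\[
(\vp_j\wh g)^\vee(x) = \sum_{k\in\zn}\vp_j(2\pi k)\,e^{i2\pi kx},\qquad x\in\rn .
\]
Hence $\big|(\vp_j\wh g)^\vee(x)\big|\le\sum_{k\in\zn}|\vp_j(2\pi k)|$ for every $x$. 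Now $\|\vp_j\,|\,L_\infty(\rn)\|\le 2\|\vp_0\,|\,L_\infty(\rn)\|$ uniformly in $j$, while $\supp\vp_j\subset\{x\in\rn:\ |x|\le 3\cdot 2^{j-1}\}$, so the number of $k\in\zn$ with $2\pi k\in\supp\vp_j$ does not exceed the number of lattice points in a ball of radius $c\,2^j$, which is $\le C\,2^{jn}$. Consequently $\sum_k|\vp_j(2\pi k)|\le C'\,2^{jn}$, and therefore
\[
\|g\,|\,\Cc^{-n}(\rn)\| = \sup_{j\in\no,\ x\in\rn}2^{-jn}\big|(\vp_j\wh g)^\vee(x)\big|\le C' < \infty ,
\]
so $g\in\Cc^{-n}(\rn)$, as claimed.

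The routine parts — the continuity of the evaluation functionals and of $F^{\pm1}$, and the explicit form of $\vp_j\wh g$ — are immediate. The only point carrying real content is the balance of dyadic scales in Step 2: the lattice $2\pi\zn$ meets the annulus $\{|x|\sim 2^j\}$ in $\asymp 2^{jn}$ points, each contributing $O(1)$, so $2^{js}\,(\vp_j\wh g)^\vee$ stays bounded precisely for $s\le -n$, matching the threshold recorded after \eqref{2.28} for a single $\delta_a$. Here one should just be a little careful that the estimate $\le C\,2^{jn}$ is uniform down to the small values of $j$, where $\supp\vp_j$ contains no point of $2\pi\zn$ at all (or only the origin).
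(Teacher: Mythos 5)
Your proof is correct, and on the decisive quantitative point --- the identity $(\vp_j\wh{g})^\vee(x)=\sum_{k\in\zn}\vp_j(2\pi k)\,e^{i2\pi kx}$ together with the lattice count of order $2^{jn}$ on $\supp \vp_j$ --- it coincides exactly with the paper's Step 2, i.e.\ with \eqref{2.32} and \eqref{2.33}. Where you genuinely differ is in how the identity in $S'(\rn)$ is extracted from the classical formula: the paper applies $F$ to the periodization identity \eqref{2.19}, obtaining $\wh{\vp}(x)\sum_{k}e^{i2\pi kx}=\sum_{k}\wh{\vp}(k)\,\delta_k$ for every $\vp\in S(\rn)$, and then concludes in its Step 3 by a local--to--global argument that the two distributions coincide; you instead read \eqref{1.1} as the statement $F\big(\sum_{k}\delta_k\big)=(2\pi)^{n/2}\sum_{k}\delta_{2\pi k}$ in $S'(\rn)$ and apply the continuous map $F^{-1}$, which yields the equality at once. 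Your route is slightly more economical: it needs neither the localization step nor the paper's separate verification (via the location property and the Fatou property of $\Cc^{s}(\rn)$, see \eqref{2.31}) that the right--hand side belongs to $\Cc^{-n}(\rn)$, since once the two sides are known to be equal it suffices to estimate one of them. The only point that must be kept explicit --- and you do keep it explicit --- is that the convergence of $\sum_{k}\delta_k$ and $\sum_{k}\delta_{2\pi k}$ in $S'(\rn)$ is what licenses pulling $F^{\pm1}$ through the infinite sums, and that the small--$j$ terms, where $\supp\vp_j$ meets $2\pi\zn$ in at most one point, cause no trouble in the uniform bound.
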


\begin{proof} {\em Step 1.} Let $\vp \in S(\rn)$. We apply the Fourier transform to \eqref{2.19}. Using $\big( F\vp(\cdot +h) \big)(x)
= e^{ihx} F\vp (x)$ and \eqref{2.28} one obtains
\begin{\eq}   \label{2.30}
\wh{\vp} (x) \sum_{k \in \zn} e^{i2 \pi kx} = \sum_{k \in \zn} \wh{\vp}(k) \, \delta_k.
\end{\eq}
\cm
{\em Step 2.} Next we check that both sides of \eqref{2.29} belong to $\Cc^{-n} (\rn)$. As far as the right--hand side is concerned we
remark that
\begin{\eq}   \label{2.31}
\Big\| \sum_{k \in \zn} a_k \delta_k \, | \Cc^s (\rn) \Big\| \sim \sup_{k \in \zn} \| a_k \delta_k \, | \Cc^s (\rn) \| \sim \sup_{k \in
\zn} |a_k|
\end{\eq}
for any $s \le -n$ and any bounded set $\{a_k \} \subset \comp$ is a consequence of $\delta_k \in \Cc^s(\rn)$and the location property
according to \cite[Section 2.4.2, pp.~43--44]{T20}, combined with the Fatou property as described in \cite[Section 1.3.4, 
p.~18]{T20} (ensuring that one can step from finite partial sums  in \eqref{2.31} to the total sum). We wish to justify (independently)
that also the left--hand side of \eqref{2.29} belongs to $\Cc^{-n} (\rn)$. For this purpose we insert $f= \sum_{k\in \zn} e^{i2\pi kx}$
in \eqref{2.23} with $s \le-n$. One has by \eqref{2.28} that
\begin{\eq}   \label{2.32}
\big(\vp_j \wh{f} \big)^\vee (x) = \sum_{k \in \zn} \vp_j (2\pi k) e^{i2\pi kx}, \qquad x\in \rn,
\end{\eq}
and for some $c>0$,
\begin{\eq}   \label{2.33}
\big| \big( \vp_j \wh{f} \big)^\vee (x) \big| \le c \, 2^{jn}, \qquad j \in \no.
\end{\eq}
Then it follows from \eqref{2.23} that the left--hand side of \eqref{2.29} is a converging series in $\Cc^s (\rn)$ with $s<-n$. In 
particular it is an element of $S'(\rn)$. Then \eqref{2.33} inserted in \eqref{2.23} with $s \le -n$ shows that the left-hand side of
\eqref{2.29} belongs to $\Cc^{-n} (\rn)$. 
\cm
{\em Step 3.} Both sides of \eqref{2.29} are elements of $S'(\rn)$. It follows from Step 1 that they coincide locally. Then they 
coincide globally. This proves the theorem.
\end{proof}

\section{Applications}   \label{S3}
\subsection{Lifts}    \label{S3.1}
Let $n\in \nat$ and
\begin{\eq}   \label{3.1}
w_\alpha (x) = (1 + |x|^2)^{\alpha/2}, \qquad \alpha \in \real, \quad x\in \rn.
\end{\eq}
Then $I_\alpha$,
\begin{\eq}   \label{3.2}
I_\alpha: \quad f \mapsto (w_\alpha \wh{f} )^\vee = (w_\alpha f^\vee )^\wedge, \qquad f\in S'(\rn), \quad \alpha \in \real,
\end{\eq}
maps $S(\rn)$ one--to--one onto itself and also $S'(\rn)$ one--to--one onto itself. Furthermore, $I_\alpha$ maps any function space
\begin{\eq}   \label{3.3}
\As (\rn), \quad A\in \{B,F \}, \quad s\in \real \quad \text{and} \quad 0<p,q \le \infty,
\end{\eq}
isomorphically onto $A^{s-\alpha}_{p,q} (\rn)$,
\begin{\eq}   \label{3.4}
\begin{aligned}
I_\alpha \As (\rn) &= A^{s-\alpha}_{p,q} (\rn), \\
\| (w_\alpha \wh{f})^\vee | A^{s-\alpha}_{p,q} (\rn) \| &\sim \|f \, | \As (\rn) \|,
\end{aligned}
\end{\eq}
including in particular $\Cc^s(\rn) = B^s_{\infty, \infty} (\rn) = F^s_{\infty, \infty} (\rn)$. One may consult \cite[Section 1.3.2,
pp.~16--17]{T20} and the references given there. We will not need  these lifting properties explicitly. But we ask what happens if
one inserts both sides of \eqref{2.29} into $I_\alpha$. One needs $\wh{w_\alpha}$. If $\alpha <0$ then
\begin{\eq}   \label{3.5}
\wh{w_\alpha} (x) = \frac{(4\pi )^{\alpha/2} (2\pi )^{n/2}}{\Gamma ({\frac{|\alpha|}{2})}} \int^\infty_0 t^{-\frac{\alpha +n}{2}}
e^{- \frac{\pi |x|^2}{t} - \frac{t}{4 \pi}} \, \frac{\di t}{t}, \qquad x\in \rn \setminus \{0\},
\end{\eq}
according to \cite[Section 1.2.4, pp.\,10--11]{AdH96} with a reference to \cite[V3.1, pp.~130--133]{Ste70}. A detailed discussion of
these so--called Bessel potentials $\wh{w_\alpha} (x)$ with $-n < \alpha <0$ near the origin $x=0$ and at infinity may be found in
\cite[Sections 1.2.3--1.2.5, pp.~9--13]{AdH96}. The more handsome formula for $\wh{w_{1-n}} (x)$, $2\le n \in \nat$, $x\in \rn \setminus \{0\}$, will be mentioned below indicating in particular the behaviour  near the origin and at infinity. 

\begin{proposition} \label{P3.1}
Let $n\in \nat$ and $\alpha \in \real$.
\cm
{\em (i)} Then
\begin{\eq}   \label{3.6}
\sum_{k \in \zn} \big(1 + |2\pi k|^2)^{\alpha/2}\, e^{i2\pi kx} = (2\pi)^{-n/2} \sum_{k\in \zn} \wh{w_\alpha} (x-k),
\end{\eq}
convergence being in $S'(\rn)$.
\cm
{\em (ii)} If $\alpha <-n$ then the above functions are bounded and continuous in \rn. If $n \ge 2$ and $\alpha =1-n$ then
\begin{\eq}   \label{3.7}
\sum_{k \in \zn} \frac{e^{i2\pi kx}}{ (1+ |2\pi k|^2)^{\frac{n-1}{2}}} = \frac{1}{(2\sqrt{\pi})^{n-1} \Gamma (\frac{n-1}{2})} \sum_{k\in \zn} \frac{e^{-|x-k|}}{|x-k|},
\end{\eq}
$($regular distributions belonging to $S'(\rn) \cap L^{\loc}_1 (\rn))$.
\end{proposition}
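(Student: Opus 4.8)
The plan is to apply the lift $I_\alpha$ of \eqref{3.2} to the identity \eqref{2.29} of Theorem \ref{T2.1} and to read off its effect on the two sides. Since $I_\alpha = F^{-1} w_\alpha F$ is a Fourier multiplier, it is continuous on $S'(\rn)$ and commutes with translations, so one may apply it termwise to the $S'(\rn)$-convergent series in \eqref{2.29}. For the $\delta$-side I would use \eqref{2.28}, which gives $\wh{\delta_0} = (2\pi)^{-n/2}$; since $w_\alpha$ is radial, $I_\alpha\delta_0 = (w_\alpha\wh{\delta_0})^\vee = (2\pi)^{-n/2}\wh{w_\alpha}$, hence $I_\alpha\delta_k = (2\pi)^{-n/2}\wh{w_\alpha}(\cdot-k)$ by translation invariance. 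For the exponential side, \eqref{2.28} gives $F(e^{i2\pi kx}) = (2\pi)^{n/2}\delta_{2\pi k}$ and $F^{-1}\delta_{2\pi k} = (2\pi)^{-n/2}e^{i2\pi kx}$, so $I_\alpha(e^{i2\pi kx}) = w_\alpha(2\pi k)\,e^{i2\pi kx} = (1+|2\pi k|^2)^{\alpha/2}e^{i2\pi kx}$. Summing over $k\in\zn$ then gives \eqref{3.6}, the $S'(\rn)$-convergence being inherited from \eqref{2.29} through the continuity of $I_\alpha$; equivalently both sides converge in $\Cc^{-n-\alpha}(\rn) = I_\alpha\Cc^{-n}(\rn)$. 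Here $\wh{w_\alpha}$ is to be read as a tempered distribution when $\alpha\ge 0$ (so that $\sum_k\wh{w_\alpha}(\cdot-k)$ means $\sum_k\tau_k\wh{w_\alpha}$, whose $S'(\rn)$-convergence is precisely the $I_\alpha$-image of the convergent series $\sum_k\delta_k$) and as the Bessel kernel \eqref{3.5} when $\alpha<0$. This settles~(i).

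For~(ii), if $\alpha<-n$ then $\sum_{k\in\zn}(1+|2\pi k|^2)^{\alpha/2}<\infty$, so the left-hand side of \eqref{3.6} is an absolutely and uniformly convergent trigonometric series, hence a bounded continuous function; alternatively one uses $\Cc^{-n-\alpha}(\rn)\hra C(\rn)$ from \eqref{2.27}, which applies since $-n-\alpha>0$. For $\alpha=1-n$, $n\ge 2$, the task is the explicit evaluation of \eqref{3.5}: there the $t$-power simplifies, $t^{-1/2}\,\di t/t = t^{-3/2}\,\di t$, and one is left with
\[
\int^\infty_0 t^{-3/2}\,e^{-a/t-bt}\,\di t = \sqrt{\pi/a}\; e^{-2\sqrt{ab}},\qquad a=\pi|x|^2,\ \ b=\tfrac{1}{4\pi},
\]
a special ($K_{1/2}$) case of the standard modified Bessel integral; with these $a,b$ the right side equals $|x|^{-1}e^{-|x|}$. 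Using $(4\pi)^{(1-n)/2} = (2\sqrt{\pi})^{-(n-1)}$ and collecting the constants in \eqref{3.5} yields
\[
\wh{w_{1-n}}(x) = \frac{(2\pi)^{n/2}}{(2\sqrt{\pi})^{n-1}\,\Gamma(\frac{n-1}{2})}\;\frac{e^{-|x|}}{|x|},\qquad x\in\rn\setminus\{0\},
\]
and substituting this into \eqref{3.6} produces \eqref{3.7}, the factors $(2\pi)^{\pm n/2}$ cancelling. For the parenthetical assertion I would note that the summand $e^{-|x-k|}/|x-k|$ has only the integrable singularity $|x-k|^{-1}$ at $x=k$ (integrable since $1<n$) and decays exponentially, so $\sum_{k\in\zn}e^{-|x-k|}/|x-k|$ converges in $L^{\loc}_1(\rn)$ and defines a nonnegative regular tempered distribution, which by~(i) coincides with the left-hand side of \eqref{3.7}.

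I expect the only genuinely computational step to be the Bessel integral above; everything else is bookkeeping with the Fourier conventions \eqref{2.28} and the mapping properties of $I_\alpha$ recorded before the proposition. The two points to watch are the interpretation of $\sum_k\wh{w_\alpha}(\cdot-k)$ in \eqref{3.6} when $\alpha\ge 0$, and, in~(ii), the role of the hypothesis $n\ge 2$, which is needed both for $|x|^{-1}$ to be locally integrable and for $1-n$ to lie in the range $(-n,0)$ in which \eqref{3.5} holds.
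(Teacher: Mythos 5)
Your proposal is correct and follows essentially the same route as the paper: apply the lift $I_\alpha$ of \eqref{3.2} to the identity \eqref{2.29}, using \eqref{2.28} to compute its action on the exponential side and on the shifted Dirac distributions, which yields \eqref{3.6}. The only difference is that for part (ii) you carry out the $K_{1/2}$ Bessel integral explicitly and track the constants (arriving at the correct normalization in \eqref{3.7}), whereas the paper simply refers this reduction of \eqref{3.5} to the calculations in \cite[Sections 1.2.4, 1.2.5]{AdH96}; your added remarks on the interpretation of $\wh{w_\alpha}$ for $\alpha\ge 0$ and on local integrability for $n\ge 2$ are sound.
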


\begin{proof} {\em Step 1.} We apply \eqref{2.29}. One has by \eqref{3.2} and \eqref{2.28} that
\begin{\eq}   \label{3.8}
\begin{aligned}
I_\alpha \Big( \sum_{k \in \zn} e^{i2\pi kx} \Big) &= (2\pi)^{n/2}  \sum_{k \in \zn} \big( (1+ |\xi|^2)^{\alpha/2} \delta_{2\pi k}
\big)^\vee \\
&= \sum_{k \in \zn} \big( 1 + |2\pi k|^2 \big)^{\alpha/2} \, e^{i2\pi kx}.
\end{aligned}
\end{\eq}
Furthermore, 
\begin{\eq}   \label{3.9}
\begin{aligned}
I_\alpha \Big( \sum_{k\in \zn} \delta_k \Big) &= \sum_{k\in \zn} \big( (1+|\xi|^2)^{\alpha/2} \wh{\delta_k} \big)^\vee \\
&= (2\pi)^{-n/2} \sum_{k\in \zn} \big( (1+ |\xi|^2)^{\alpha/2} e^{-ik \xi} \big)^\vee \\
&= (2\pi)^{-n/2} \sum_{k \in \zn} \wh{w_\alpha} (x-k).
\end{aligned}
\end{\eq}
Applied to \eqref{2.29} one obtains \eqref{3.6}.
\cm
{\em Step 2.} If $\alpha <-n$ the the sums in \eqref{3.6} converge pointwise to a bounded continuous function. 
If $\alpha =1-n$, $n \ge 2$, then \eqref{3.5} can be reduced to more elementary functions. Adapting  the corresponding
calculations in \cite[Sections 1.2.4, 1.2.5, pp.~10--13]{AdH96} one obtains the  indicated  regular distribution  in \eqref{3.7}.
\end{proof}

\subsection{Gauss--Weierstrass semi--groups}    \label{S3.2}
The Gauss--Weierstrass semi--group $W_t$,
\begin{\eq}   \label{3.10}
\begin{aligned}
W_t f(x) &= \frac{1}{(4\pi t)^{n/2}} \int_{\rn} e^{- \frac{|x-y|^2}{4t}} f(y) \, \di y 
= \frac{1}{(4\pi t)^{n/2}} \Big(f, e^{- \frac{|x- \cdot|^2}{4t}} \Big) \\
&=  \big( e^{-t|\cdot|^2} \wh{f} (\cdot) \big)^\vee (x), \qquad t>0,\quad x\in \rn,
\end{aligned}
\end{\eq}
plays a fundamental role in the theory of function spaces and also other parts of mathematics. Further information may be found in
\cite[Section 1.2, pp.~4--5]{T15}, \cite[p.~106]{T20} and the references given there. This may justify to insert both sides of
\eqref{2.29} into the last version of \eqref{3.10} which makes sense for any $f\in S'(\rn)$.

\begin{proposition}   \label{P3.2}
Let $n \in \nat$ and $t>0$. Then 
\begin{\eq}    \label{3.11}
\sum_{k\in \zn} e^{-t \pi^2 |k|^2} e^{i2 \pi kx} = \frac{1}{(t \pi)^{n/2}} \sum_{k \in \zn} e^{-\frac{|x-k|^2}{t}}, \qquad x\in \rn.
\end{\eq}
\end{proposition}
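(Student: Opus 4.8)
The strategy is to apply the Gauss--Weierstrass operator $W_t$ to both sides of the identity \eqref{2.29} from Theorem \ref{T2.1}, using the Fourier-multiplier form $W_t f = \big( e^{-t|\cdot|^2} \wh{f}\,(\cdot) \big)^\vee$ which, as noted after \eqref{3.10}, is well defined on all of $S'(\rn)$. Since $e^{-t|\cdot|^2}$ is a multiplier on $S'(\rn)$ and $W_t$ is continuous there, one may pass $W_t$ through the $S'$-convergent series on each side, so the only real task is to compute $W_t$ on the two types of building blocks $e^{i2\pi kx}$ and $\delta_k$.

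\textbf{Left-hand side.} Using $F(e^{iax}) = (2\pi)^{n/2}\delta_a$ from \eqref{2.28}, one gets $W_t(e^{i2\pi kx}) = \big( e^{-t|\cdot|^2}\, (2\pi)^{n/2}\delta_{2\pi k} \big)^\vee = e^{-t|2\pi k|^2}\, e^{i2\pi kx} = e^{-4\pi^2 t |k|^2}\, e^{i2\pi kx}$, where the Gaussian factor is simply the value of the multiplier at the point $2\pi k$. Summing over $k\in\zn$ reproduces the left-hand side of \eqref{3.11} up to the bookkeeping of constants; here one should double-check that the exponent in \eqref{3.11} reads $t\pi^2|k|^2$, i.e.\ confirm the normalization $|2\pi k|^2 = 4\pi^2|k|^2$ matches the stated exponent (so one expects $e^{-4\pi^2 t|k|^2}$; the discrepancy with the printed $e^{-t\pi^2|k|^2}$ is presumably a rescaling of $t$ or a typo to be resolved in the writing).

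\textbf{Right-hand side.} For the Dirac masses, $W_t \delta_k$ is just the Gauss--Weierstrass kernel centred at $k$: from the integral form in \eqref{3.10}, $W_t\delta_k(x) = \frac{1}{(4\pi t)^{n/2}} \big(\delta_k, e^{-|x-\cdot|^2/(4t)}\big) = \frac{1}{(4\pi t)^{n/2}}\, e^{-|x-k|^2/(4t)}$. Summing over $k\in\zn$ gives $\frac{1}{(4\pi t)^{n/2}} \sum_{k\in\zn} e^{-|x-k|^2/(4t)}$, which is to be matched against the right-hand side $\frac{1}{(t\pi)^{n/2}}\sum_{k\in\zn} e^{-|x-k|^2/t}$; again this forces a rescaling $t \mapsto t/4$ (or absorbs a factor of $4$) and the constant $\frac{1}{(4\pi t)^{n/2}}$ becomes $\frac{1}{(t\pi)^{n/2}}$ under that substitution. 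Equating the two images of $W_t$ applied to the equal distributions in \eqref{2.29} then yields \eqref{3.11}.

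\textbf{Main obstacle.} There is no deep difficulty: the substance is entirely contained in Theorem \ref{T2.1}, and the remaining work is the justification that $W_t$ commutes with the $S'$-limit and the careful tracking of the $2\pi$ versus $4\pi t$ constants so that the final normalization in \eqref{3.11} comes out exactly right. The one point deserving a sentence of care is continuity of $W_t$ on $S'(\rn)$: since $e^{-t|\cdot|^2} \in S(\rn)$ acts as a multiplier on $S'(\rn)$ and $F, F^{-1}$ are continuous on $S'(\rn)$, the map $W_t = F^{-1}\big(e^{-t|\cdot|^2} F(\cdot)\big)$ is continuous, hence interchangeable with the convergent series $\sum_{k\in\zn}\delta_k$; this is where the fact that the series in \eqref{2.29} converges in $S'(\rn)$ (rather than merely pointwise) is used.
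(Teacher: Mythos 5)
Your proposal is correct and follows essentially the same route as the paper: apply $W_t$ to both sides of \eqref{2.29}, compute its action on the building blocks $e^{i2\pi kx}$ and $\delta_k$ via \eqref{2.28} and \eqref{3.13}, and then rescale. The ``discrepancy'' you flag is not a typo: the paper obtains exactly your expressions $e^{-4\pi^2 t|k|^2}$ and $(4\pi t)^{-n/2}e^{-|x-k|^2/(4t)}$ in \eqref{3.12}, \eqref{3.14} and then states \eqref{3.11} ``with $t$ in place of $4t$,'' which is precisely your substitution $t\mapsto t/4$.
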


\begin{proof}
One has by \eqref{3.10} and \eqref{2.28} that
\begin{\eq}   \label{3.12}
\begin{aligned}
W_t \Big( \sum_{k\in \zn} e^{i2\pi k \xi} \Big)(x) &= (2\pi)^{n/2} \sum_{k \in \zn} \big( e^{-t|\cdot|^2} \delta_{2\pi k} \big)^\vee
(x) \\
&= \sum_{k\in \zn} e^{-t|2\pi k|^2} e^{i2\pi kx},
\end{aligned}
\end{\eq}
first in the context of the dual pairing $\big(S(\rn), S'(\rn) \big)$, then pointwise $x\in \rn$. Recall that
\begin{\eq}   \label{3.13}
\big( e^{- \frac{|\cdot|^2}{2a}} \big)^\wedge (x) = a^{\frac{n}{2}} e^{-\frac{a}{2} |x|^2}, \qquad x\in \rn, \quad a>0.
\end{\eq}
Then it follows from \eqref{2.28} and \eqref{3.13} with $2at =1$ that
\begin{\eq}   \label{3.14}
\begin{aligned}
W_t \Big( \sum_{k \in \zn} \delta_k \Big)(x) &= (2\pi)^{-n/2} \sum_{k\in \zn} \big( e^{-t|\xi|^2} e^{-ik \xi} \big)^\vee (x) \\
&= (2\pi)^{-n/2} \sum_{k\in \zn} \big( e^{-t |\xi|^2} \big)^\vee (x-k) \\
&= (2\pi)^{-n/2} \sum_{k \in \zn} \big( \frac{1}{2t} )^{n/2} e^{-\frac{|x-k|^2}{4t}}
\end{aligned}
\end{\eq}
Then \eqref{3.11} follows from \eqref{2.29} and \eqref{3.12}, \eqref{3.14} with $t$ in place of $4t$.
\end{proof}

\begin{remark}   \label{R3.3}
By \eqref{3.11} one has
\begin{\eq}  \label{3.15}
\sum_{k\in \zn} e^{-t|k|^2} = \big(\frac{\pi}{t} \big)^{\frac{n}{2}} \sum_{k\in \zn} e^{- \frac{\pi^2 |k|^2}{t}}, \qquad t>0.
\end{\eq}
The case $n=1$ is a famous transformation formula for one of the so--called theta functions, \cite[10.11, p.~182]{Edw79} playing a
role in analytic number theory.
\end{remark}

\subsection{Cauchy--Poisson semi--groups}   \label{S3.3}
Next we ask what happens if one inserts both sides of \eqref{2.29} into the Cauchy--Poisson semi--group $P_t$,
\begin{\eq}   \label{3.16}
\begin{aligned}
P_t f(x) &= \big( e^{-t|\cdot|} \wh{f}(\cdot) \big)^\vee (x) \\
&= c_n \int_{\rn} \frac{t}{\big( |x-y|^2 + t^2 \big)^{\frac{n+1}{2}}} \, f(y) \, \di y, \qquad x\in \rn, \quad t>0,
\end{aligned}
\end{\eq}
with
\begin{\eq}  \label{3.17}
c_n  \int_{\rn} (1+|x|^2)^{-\frac{n+1}{2}} \, \di x  =1.
\end{\eq}
Similarly as the Gauss--Weierstrass semi--group $W_t$, treated in Section \ref{S3.2} above, also $P_t$ plays a substantial role in
the theory of function spaces. The classical part, including \eqref{3.17} and
\begin{\eq}   \label{3.18}
\big( e^{-t |\xi|} \big)^\vee (x) = (2\pi)^{n/2} c_n \frac{t}{(|x|^2 + t^2)^{\frac{n+1}{2}}}, \qquad t>0, \quad x\in \rn,
\end{\eq}
may be found in \cite[Section 2.5.3, pp.~192--196]{T78}. For more recent assertions and related references we refer the reader to
\cite{T15}. But this will not be needed here. The counterpart of Proposition \ref{P3.2} now based on $P_t$ instead of $W_t$ reads
as follows.

\begin{proposition}   \label{P3.4}
Let $n \in \nat$ and $t>0$. Then
\begin{\eq}   \label{3.19}
\sum_{k\in \zn}  e^{-2\pi t |k|} e^{i2 \pi kx} = c_n t  \sum_{k\in \zn} \frac{1}{(|x-k|^2 + t^2)^{\frac{n+1}{2}}}, \qquad x\in \rn,
\end{\eq}
with $c_n$ as in \eqref{3.17}.
\end{proposition}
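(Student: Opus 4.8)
The plan is to follow verbatim the strategy used for Proposition \ref{P3.2}, replacing the Gauss--Weierstrass multiplier $e^{-t|\cdot|^2}$ by the Cauchy--Poisson multiplier $e^{-t|\cdot|}$ and inserting both sides of the identity \eqref{2.29} into the operator $P_t$, which makes sense on all of $S'(\rn)$ by its first description in \eqref{3.16}.

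First I would apply $P_t$ to the left-hand side of \eqref{2.29}. Since $P_t f = (e^{-t|\cdot|}\wh{f})^\vee$ and $F(e^{i2\pi kx}) = (2\pi)^{n/2}\delta_{2\pi k}$ by \eqref{2.28}, interchanging $P_t$ with the series gives
\[
P_t\Big(\sum_{k\in\zn} e^{i2\pi k\xi}\Big)(x) = (2\pi)^{n/2}\sum_{k\in\zn}\big(e^{-t|\cdot|}\delta_{2\pi k}\big)^\vee(x) = \sum_{k\in\zn} e^{-2\pi t|k|}\, e^{i2\pi kx},
\]
and this last series converges absolutely and locally uniformly, hence also pointwise in $\rn$. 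Next I would apply $P_t$ to the right-hand side $\sum_{k\in\zn}\delta_k$. Using again $P_t f = (e^{-t|\cdot|}\wh{f})^\vee$ together with $\wh{\delta_k} = (2\pi)^{-n/2}e^{-ik\xi}$ from \eqref{2.28} and the translation rule, one obtains
\[
P_t\Big(\sum_{k\in\zn}\delta_k\Big)(x) = (2\pi)^{-n/2}\sum_{k\in\zn}\big(e^{-t|\xi|}\big)^\vee(x-k).
\]
Inserting the classical formula \eqref{3.18} for $(e^{-t|\xi|})^\vee$ turns this into $c_n t\sum_{k\in\zn}(|x-k|^2+t^2)^{-(n+1)/2}$, a series dominated by $\sum_{|k|\ge 1}|k|^{-(n+1)}$ and so convergent to a genuine function of $x$. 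Equating the two outcomes via \eqref{2.29} yields \eqref{3.19}.

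The only point needing care — the ``main obstacle'', though a mild one — is the justification that $P_t$ commutes with the infinite summation on both sides and that the resulting series represent the functions pointwise. For this I would invoke that $\sum_{k\in\zn}\delta_k\in\Cc^{-n}(\rn)$ by Theorem \ref{T2.1}, that $P_t$ acts by convolution with the smooth, rapidly decaying kernel $c_n t(|\cdot|^2+t^2)^{-(n+1)/2}$, and hence is continuous on $S'(\rn)$ and maps it into $C^\infty(\rn)$; continuity in $S'(\rn)$ then allows one to pass from the finite partial sums, for which everything is elementary, to the full series. This is exactly the pattern of the proof of Proposition \ref{P3.2}, where the identities are obtained ``first in the context of the dual pairing $\big(S(\rn),S'(\rn)\big)$, then pointwise $x\in\rn$'', so no new ideas beyond the substitution of the semi--group are required.
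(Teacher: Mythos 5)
Your proposal is correct and follows essentially the same route as the paper: insert both sides of \eqref{2.29} into $P_t$, use \eqref{2.28} to get \eqref{3.20}, use \eqref{2.28} and \eqref{3.18} to get \eqref{3.21}, and equate. The extra remarks on convergence and passing from the dual pairing to pointwise identities only make explicit what the paper leaves implicit (as in Proposition \ref{P3.2}).
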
 

\begin{proof}
Instead of \eqref{3.12} one has now
\begin{\eq}   \label{3.20}
\begin{aligned}
P_t \Big( \sum_{k\in \zn} e^{i2 \pi k\xi} \Big) (x) &= (2\pi)^{n/2} \sum_{k\in \zn} \big( e^{-t|\cdot|} \delta_{2 \pi k} \big)^\vee
(x) \\
&= \sum_{k\in \zn} e^{-t2 \pi |k|} e^{i2\pi kx}.
\end{aligned}
\end{\eq}
In the counterpart of \eqref{3.14} we use in addition \eqref{3.18}. Then one obtains that
\begin{\eq}   \label{3.21}
\begin{aligned}
P_t \Big( \sum_{k\in \zn} \delta_k \Big)(x) &= (2\pi)^{-n/2} \sum_{k\in \zn} \big( e^{-t|\xi|} e^{-ik \xi}\big)^\vee (x) \\
&= c_n \sum_{k\in \zn} \frac{t}{(|x-k|^2 + t^2)^{\frac{n+1}{2}}}
\end{aligned}
\end{\eq}
Then \eqref{3.19} follows from \eqref{2.29} and \eqref{3.20}, \eqref{3.21}.
\end{proof}

Again one may ask for applications. For $x=0$ and $t=1$ one has
\begin{\eq}   \label{3.22}
\sum_{k\in \zn} e^{-2\pi |k|} = c_n \sum_{k\in \zn} \frac{1}{(1+|k|^2)^{\frac{n+1}{2}}}.
\end{\eq}
This can be calculated for $n=1$ as follows.

\begin{corollary}   \label{C3.5}
It holds
\begin{\eq}   \label{3.23}
2 \sum_{k=0}^\infty \frac{1}{1+k^2}=\pi \frac{1+ e^{-2\pi}}{1- e^{-2\pi}} +1.
\end{\eq}
\end{corollary}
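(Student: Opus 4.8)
The plan is to specialize the already-established identity \eqref{3.22} to $n=1$ and evaluate both of its sides in closed form. First I would pin down the normalizing constant $c_1$ from \eqref{3.17}: since $\int_{\real}(1+x^2)^{-1}\,\di x = \pi$, one has $c_1 = 1/\pi$. Moreover, for $n=1$ the exponent $\frac{n+1}{2}$ appearing in \eqref{3.22} equals $1$, so the right-hand side of \eqref{3.22} is simply $\frac1\pi \sum_{k\in\ganz}(1+k^2)^{-1}$.

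Next I would compute the left-hand side of \eqref{3.22} for $n=1$ by splitting off the term $k=0$ and summing a geometric series,
\[
\sum_{k\in\ganz} e^{-2\pi|k|} = 1 + 2\sum_{k=1}^{\infty} e^{-2\pi k} = 1 + \frac{2e^{-2\pi}}{1-e^{-2\pi}} = \frac{1+e^{-2\pi}}{1-e^{-2\pi}}.
\]
Combining this with the previous observation, \eqref{3.22} becomes $\pi\,\frac{1+e^{-2\pi}}{1-e^{-2\pi}} = \sum_{k\in\ganz}(1+k^2)^{-1}$, and writing the two-sided sum as $1 + 2\sum_{k=1}^{\infty}(1+k^2)^{-1}$ yields $\pi\,\frac{1+e^{-2\pi}}{1-e^{-2\pi}} = 1 + 2\sum_{k=1}^{\infty}(1+k^2)^{-1}$. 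Since $2\sum_{k=0}^{\infty}(1+k^2)^{-1} = 2 + 2\sum_{k=1}^{\infty}(1+k^2)^{-1}$, adding $1$ to both sides gives $2\sum_{k=0}^{\infty}(1+k^2)^{-1} = \pi\,\frac{1+e^{-2\pi}}{1-e^{-2\pi}} + 1$, which is precisely \eqref{3.23}.

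There is no genuine obstacle here; the argument is pure bookkeeping once Proposition \ref{P3.4} (hence \eqref{3.22}) is in hand. The only points that require a little care are the correct value $c_1 = 1/\pi$, the passage between the symmetric sum over $\ganz$ and the one-sided sum starting at $k=0$ (in both the exponential and the rational series the $k=0$ term equals $1$), and the elementary geometric summation; all series converge absolutely, so these rearrangements are unproblematic.
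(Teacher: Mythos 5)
Your proposal is correct and follows the paper's own route: specialize \eqref{3.22} to $n=1$, use $c_1^{-1}=\int_{\real}(1+x^2)^{-1}\,\di x=\pi$, sum the geometric series, and rearrange between the two-sided and one-sided sums. The only cosmetic difference is that the paper writes the exponential sum as $\frac{2}{1-e^{-2\pi}}-1$ rather than $\frac{1+e^{-2\pi}}{1-e^{-2\pi}}$, which is the same quantity.
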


\begin{proof}
One has by \eqref{3.22} with $n=1$ that 
\begin{\eq}   \label{3.24}
1 + \sum_{k=1}^\infty \frac{2}{1+ k^2} = \frac{1}{c_1} \Big( \frac{2}{1-e^{-2\pi}} -1 \Big).
\end{\eq}
Together with $c^{-1}_1 = \int_{\real} (1+ x^2)^{-1} \, \di x = \pi$ one obtains \eqref{3.23}.
\end{proof}

\subsection{Fourier operators and pseudodifferential operators}   \label{S3.4}
We call $F_\tau$,
\begin{\eq}   \label{3.25}
(F_\tau f)(x) = \int_{\rn} e^{-ix \xi} \tau (x,\xi) f (\xi) \, \di \xi, \qquad x\in \rn,
\end{\eq}
a {\em Fourier operator} of the class $\Phi^\sigma_{1,\delta} (\rn)$ with $\sigma \in \real$ and $0 \le \delta
< 1$ if the symbol $\tau (x,\xi) \in C^\infty (\real^{2n} )$ satisfies for some constants $c_{\alpha, \gamma} \ge 0$,
\begin{\eq}   \label{3.26}
\big| D^\alpha_x D^\gamma_\xi \tau (x, \xi) \big| \le c_{\alpha, \gamma} (1+ |\xi|)^{\sigma - |\gamma| + \delta |\alpha|}, \qquad
x\in \rn, \quad \xi \in \rn,
\end{\eq}
$\alpha \in \nat^n_0$, $\gamma \in \nat^n_0$. This is the direct counterpart of the H\"{o}rmander
class $\Psi^\sigma_{1, \delta}(\rn)$ of the
{\em pseudodifferential operators} $T_\tau$,
\begin{\eq}   \label{3.27}
(T_\tau f)(x) = \int_{\rn} e^{-ix \xi} \tau (x,\xi) f^\vee (\xi) \, \di \xi, \qquad x\in \rn,
\end{\eq}
with $\tau$ as in \eqref{3.26}. In particular one can compose 
$F_\tau \in \Phi^\sigma_{1,\delta} (\rn)$ as
\begin{\eq}   \label{3.28}
F_\tau = T_\tau \circ F, \qquad T_\tau \in \Psi^\sigma_{1, \delta} (\rn),
\end{\eq}
where $F$ is again the Fourier transform. The well--known mapping properties
\begin{\eq}   \label{3.29}
T_\tau: \quad S(\rn) \hra S(\rn) \quad \text{and} \quad T_\tau: \quad S'(\rn) \hra S'(\rn)
\end{\eq}
are covered by \cite[pp.~68, 70, 94]{Hor85}. Then it follows from \eqref{3.27} and basic mapping properties of $F$ that also
\begin{\eq}   \label{3.30}
F_\tau: \quad S(\rn) \hra S(\rn) \quad \text{and} \quad F_\tau: \quad S'(\rn) \hra S'(\rn).
\end{\eq}
This shows that one can insert the identity \eqref{2.29} both in $T_\tau$ and $F_\tau$. The outcome is more or less the same and it is
sufficient to deal with $F_\tau$. Furthermore we are not interested in most general assertions. We wish to show what type of assertions
can be expected. We deal with the model case $\tau \in \Phi (\rn)$, consisting of all symbols $\tau \in \Phi^0_{1,0} (\rn)$ which are 
independent of $x\in \rn$. This means that $\tau$ is a $C^\infty$ function in $\rn$ such that for some $c_\gamma \ge 0$,
\begin{\eq}   \label{3.31}
|D^\gamma_\xi \tau (\xi) | \le c_\gamma (1+ |\xi|)^{-|\gamma}, \qquad \xi \in \rn, \quad \gamma \in \no.
\end{\eq}

\begin{proposition}   \label{P3.6}
Let $\tau \in \Phi (\rn)$. Then
\begin{\eq}   \label{3.31a}
\sum_{k\in \zn} \tau (k) e^{-ixk} = (2\pi)^{n/2} \sum_{k\in \zn} \wh{\tau} (x- 2\pi k),
\end{\eq}
convergence being in $S'(\rn)$.
\end{proposition}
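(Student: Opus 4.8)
The plan is to feed the identity \eqref{2.29} through the Fourier operator $F_\tau$ and to read off the images of the two sides. Since $\tau$ does not depend on $x$, the defining formula \eqref{3.25} and the normalisation \eqref{2.4} give $F_\tau = (2\pi)^{n/2}\, F\circ M_\tau$, where $M_\tau$ is pointwise multiplication by $\tau$; this is just the case of \eqref{3.28} with an $x$-independent symbol. The estimate \eqref{3.31} (for all $\gamma$) says that $\tau$ and all its derivatives are bounded, hence $M_\tau$ maps $S(\rn)$ into itself and therefore acts on $S'(\rn)$, and $\wh\tau$ is a convolutor on $S'(\rn)$; together with \eqref{3.30} this shows that $F_\tau$ is a weak-$\ast$ sequentially continuous operator on $S'(\rn)$. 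By Theorem \ref{T2.1} both series in \eqref{2.29} converge in $S'(\rn)$, so $F_\tau$ (equivalently $F$, $M_\tau$ and translation) may be interchanged with the summation over $k\in\zn$.

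For the right-hand side of \eqref{2.29}, multiplying the partial sums $\sum_{|k|\le N}\delta_k$ by the $C^\infty$ function $\tau$ gives $\sum_{|k|\le N}\tau(k)\delta_k$, and passing to the $S'$-limit together with $F\delta_k = (2\pi)^{-n/2}e^{-ikx}$ from \eqref{2.28} yields $F_\tau\big(\sum_k\delta_k\big) = (2\pi)^{n/2}F\big(\sum_k\tau(k)\delta_k\big) = \sum_k\tau(k)e^{-ikx}$. For the left-hand side one uses in addition $F\big(e^{i2\pi kx}\big) = (2\pi)^{n/2}\delta_{2\pi k}$ from \eqref{2.28} and the convolution rule $F(\tau g) = (2\pi)^{-n/2}\wh\tau * \wh g$, which is legitimate here because $\wh\tau$ is a convolutor; since $\wh\tau * \delta_{2\pi k}$ is the translate $\wh\tau(\cdot - 2\pi k)$, this gives
\[
F_\tau\Big(\sum_{k\in\zn}e^{i2\pi kx}\Big) = (2\pi)^{n/2}\sum_{k\in\zn} F\big(\tau\, e^{i2\pi kx}\big) = (2\pi)^{n/2}\sum_{k\in\zn}\wh\tau(x-2\pi k).
\]
Because the two sides of \eqref{2.29} are the same element of $S'(\rn)$ and $F_\tau$ is well defined on $S'(\rn)$, the two expressions above coincide, which is exactly \eqref{3.31a}; convergence of $\sum_k\wh\tau(x-2\pi k)$ in $S'(\rn)$ is then automatic.

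I do not expect a genuine obstacle: the substance of the argument is Theorem \ref{T2.1}, and everything else is bookkeeping with the elementary Fourier identities \eqref{2.28}. The only points that require a line of justification are the interchanges of $F_\tau$ (equivalently of $F$ and of $M_\tau$) with the infinite series and the validity of the convolution rule $F(\tau g)=(2\pi)^{-n/2}\wh\tau * \wh g$ in the distributional setting; both reduce to the weak-$\ast$ continuity of these operations on $S'(\rn)$ and to the fact, contained in Theorem \ref{T2.1}, that the coefficient sequences $\{1\}$ and $\{\tau(k)\}$ are bounded, so that $\sum_k\tau(k)\delta_k$ converges in $\Cc^{-n}(\rn)\hra S'(\rn)$.
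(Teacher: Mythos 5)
Your proposal is correct and follows essentially the same route as the paper: insert both sides of \eqref{2.29} into $F_\tau=(2\pi)^{n/2}F\circ M_\tau$ and evaluate the two images via \eqref{2.28}, exactly as in \eqref{3.32}--\eqref{3.33}. The extra remarks on weak-$\ast$ continuity of $M_\tau$ and $F$ on $S'(\rn)$ and on the convolution rule merely make explicit the justifications the paper leaves implicit through \eqref{3.30}.
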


\begin{proof}
Inserting $f= \sum_{k\in \zn} \delta_k$ in \eqref{3.25} then it follows from \eqref{2.28} that
\begin{\eq}   \label{3.32}
F_\tau f = (2 \pi)^{n/2} \Big( \tau (\xi) \sum_{k\in \zn} \delta_k \Big)^\wedge = \sum_{k\in \zn} \tau (k) e^{-ixk}.
\end{\eq}
Similarly one has for $f= \sum_{k\in \zn} e^{i 2 \pi k \xi}$ that
\begin{\eq}   \label{3.33}
F_\tau f = (2\pi )^{n/2} \Big( \tau (\xi)  \sum_{k\in \zn} e^{i 2 \pi k \xi} \Big)^\wedge = (2\pi )^{n/2} \sum_{k\in \zn} \wh{\tau}
(x- 2\pi k).
\end{\eq}
Then  \eqref{3.31a} follows from \eqref{2.29}.
\end{proof}

\begin{remark}   \label{R3.7}
As indicated the identity \eqref{3.31a} must be understood in the context of the dual pairing $\big( S(\rn), S'(\rn) \big)$. One may ask for which 
$\tau$ the identity holds pointwise or belongs to suitable function spaces, for example $\Cc^s (\rn) = B^s_{\infty, \infty} (\rn)$,
$s >0$. But this will not be done here. Of course the above arguments can also be applied to symbols $\tau (\xi) \in \Phi^\sigma_{1,0}
(\rn)$, $\sigma \in \real$, being independent of $x\in \rn$. If the symbol $\tau = \tau (x, \xi) \in \Phi^\sigma_{1,\delta} (\rn)$,
$\sigma \in \real$, $0 \le \delta < 1$, depends on $x\in \rn$, then one has
\begin{\eq}   \label{3.34}
F_\tau f = \sum_{k\in \zn} \tau(x, k) \, e^{-ixk}
\end{\eq}
instead of \eqref{3.32} and
\begin{\eq} \label{3.35}
F_\tau f = \sum_{k\in \zn} \int_{\rn} e^{-i(x- 2\pi k)\xi} \tau(x, \xi) \, \di \xi
\end{\eq}
as the substitute of \eqref{3.33} resulting in the suitably interpreted identity
\begin{\eq}   \label{3.36}
\sum_{k\in \zn} \tau(x, k) \, e^{-ixk} = \sum_{k\in \zn} \int_{\rn} e^{-i(x- 2\pi k)\xi} \tau(x, \xi) \, \di \xi
\end{\eq}
in the framework of the dual pairing $\big( S(\rn), S'(\rn) \big)$. We do not know whether this (known or unknown) modified Poisson
summation formula is of any use. 
\end{remark}

\subsection{Diffeomorphisms}   \label{S3.5}
So far we inserted the identity \eqref{2.29} into distinguished  linear operators and obtained in this way diverse modifications of the
classical Poisson summation formula \eqref{1.1}. The identity \eqref{3.31a} may serve as an example. But one can try to extend first
the identity \eqref{2.29} and to proceed afterwards in the indicated way. Some modifications are quite straightforward. Let $A =
(a_{j,k})^n_{j,k =1}$, $a_{j,k} \in \real$, be an invertible matrix, generating an affine mapping $y=Ax$, $x\in \rn$, of $\rn$ onto 
itself. Let $L = A \zn$ be the related lattice, called Dirac comb. Then one can modify \eqref{2.29} replacing there
\begin{\eq}   \label{3.37}
\sum_{k\in \zn} \delta_k \quad \text{by} \quad \mu= \sum^N_{j=1} \sum_{\lambda \in L+h_j} \sum_{|m| \le M} c^j_m e^{im\lambda} 
\delta_\lambda,
\end{\eq}
$h_j \in \rn$, $c^j_m \in \comp$, and a correspondingly adapted left--hand side. It is the main aim of \cite{LeO15, LeO17} to prove
that a positive measure $\mu$ in $\rn$ with uniformly discrete support such that $\wh{\mu}$ has also a discrete closed support 
(spectrum) must be necessarily as described in \eqref{3.37} for suitably chosen $L, N, h_j$ and $c^j_m$. But the study of the atomic
structure of crystals and so--called quasi--crystals suggests to ask for modified Poisson summation formulas with underlying sets of
off--points which are less regular than indicated in \eqref{3.37}. One may consult the references \cite{LeO15, LeO17, Mey16, Mey17}
and in particular \cite{Gon23, GoV25} already mentioned in the Introduction \ref{S1}. We indicate how one can contribute to this topic
in the framework of the theory of function spaces. Of peculiar interest might be isomorphic mappings of the spaces $\As (\rn)$, $A \in
\{B,F \}$, including  $\Cc^{-n} (\rn) = B^{-n}_{\infty, \infty} (\rn) = F^{-n}_{\infty, \infty} (\rn)$, onto itself based on
diffeomorphic mappings of $\rn$ onto itself. We give a brief description following \cite[Section 2.3, pp.\,39--40]{T20}. There one 
finds further explanations and references.

A continuous  one--to--one mapping of $\rn$, $n\in \nat$,  onto itself,
\begin{align}  \label{3.38}
y &= \psi (x) = \big(\psi_1 (x), \ldots, \psi_n (x) \big), && \text{$x\in \rn$}, &&  \\  \label{3.39}
x &= \psi^{-1} (y) = \big( \psi^{-1}_1 (y), \ldots, \psi^{-1}_n (y) \big), && \text{$y\in \rn$}, &&
\end{align}
is called a {\em  diffeomorphism} if all components $\psi_j (x)$ and of its inverse $\psi^{-1}_j (y)$ are real $C^\infty$ functions on $\rn$ and for $j=1, \ldots,n$,
\begin{\eq}   \label{3.40}
\sup_{x\in \rn} \big( | D^\alpha \psi_j (x) | + | D^\alpha \psi^{-1}_j (x)| \big) < \infty \quad  \text{for all $\alpha \in \nat^n_0$
with $|\alpha| >0$}.
\end{\eq}
Then $\vp \mapsto \vp \circ \psi$, given by $(\vp \circ \psi )(x) = \vp \big( \psi(x) \big)$, is an one--to--one mapping of $S(\rn)$ onto itself. This can be extended to an one--to--one mapping of $S'(\rn)$ onto itself by
\begin{\eq}   \label{3.41}
(f \circ \psi )(\vp) = \big( f, |\mathrm{det\,}\psi^{-1}_* | \, \vp \circ \psi^{-1} \big), \qquad f \in S' (\rn), \quad \vp \in S(\rn),
\end{\eq}
as the distributional version of
\begin{\eq}  \label{3.42}
\int_{\rn} \big( f \circ \psi \big) (x) \, \vp (x) \, \di x = \int_{\rn} f(y) \, \big(\vp \circ \psi^{-1} \big)(y) \big|\mathrm{det \,}
\psi^{-1}_* \big|(y) \, \di y,
\end{\eq}                                                                                                                            
(change of variables) where $\psi^{-1}_*$ is the Jacobian and $\mathrm{det \, } \psi^{-1}_*$ its determinant. Recall that $\big|\mathrm{det \,}\psi^{-1}_* \big|(y)$ is a $C^\infty$ function in $\rn$ and
\begin{\eq} \label{3.43}
c_1 \le \big|\mathrm{det \,} \psi^{-1}_* \big|(y) \le c_2 \quad \text{for all $y\in \rn$ and some $0<c_1 \le c_2 <\infty$}.
\end{\eq}
Then
\begin{\eq}   \label{3.44}
D_\psi: \quad \As (\rn) \hra \As (\rn), \qquad D_\psi f = f \circ \psi
\end{\eq}
is an isomorphic mapping for all spaces $\As (\rn)$ with $s\in \real$ and $0<p,q \le \infty$. 

We apply the above considerations to \eqref{2.29}.

\begin{proposition}   \label{P3.8}
Let $\psi$ be the above diffeomorphic mapping of $\rn$ onto itself. Then
\begin{\eq}   \label{3.45}
\begin{aligned}
\sum_{k \in \zn} e^{i2\pi k\psi (x)} &= \sum_{k\in \zn} \big|\mathrm{det \,} \psi^{-1}_* \big|(k) \, \delta_{\psi^{-1} (k)} \\
&= \sum_{k \in \zn} \big|\mathrm{det \,} \psi_* \big|^{-1} \delta_{\psi^{-1} (k)} \in \Cc^{-n} (\rn),
\end{aligned}
\end{\eq}
convergence being in $S'(\rn)$.
\end{proposition}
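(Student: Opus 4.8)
The plan is to apply the diffeomorphism operator $D_\psi$ from \eqref{3.44}, with $s=-n$, $p=q=\infty$, to the identity \eqref{2.29} of Theorem \ref{T2.1}. Since $D_\psi$ maps $\Cc^{-n}(\rn) = B^{-n}_{\infty,\infty}(\rn)$ isomorphically onto itself, applying it to the left-hand side of \eqref{2.29} gives an element of $\Cc^{-n}(\rn)$, and the convergence in $S'(\rn)$ is preserved because $D_\psi$ is continuous on $S'(\rn)$ by \eqref{3.41}. The two sides of \eqref{3.45} will then simply be the images under $D_\psi$ of the two sides of \eqref{2.29}, so the heart of the matter is to compute $D_\psi\bigl(e^{i2\pi kx}\bigr)$ and $D_\psi(\delta_k)$ for each fixed $k\in\zn$ and then assemble the series.

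First I would handle the left-hand side. By definition $D_\psi(e^{i2\pi k\,\cdot})(x) = e^{i2\pi k\psi(x)}$, so the left-hand side of \eqref{2.29} transforms directly into $\sum_{k\in\zn} e^{i2\pi k\psi(x)}$, which is the left-hand side of \eqref{3.45}; here one should note that the partial sums still converge in $S'(\rn)$ after composition, which is immediate from the continuity of $D_\psi$ already used. Second I would compute $D_\psi(\delta_k)$. Using \eqref{3.41} with $f=\delta_k$,
\begin{\eq}  \label{pl.1}
(D_\psi \delta_k)(\vp) = \delta_k\bigl( |\mathrm{det\,}\psi^{-1}_*| \,\vp\circ\psi^{-1}\bigr) = |\mathrm{det\,}\psi^{-1}_*|(k)\,\vp\bigl(\psi^{-1}(k)\bigr) = |\mathrm{det\,}\psi^{-1}_*|(k)\,\delta_{\psi^{-1}(k)}(\vp),
\end{\eq}
for $\vp\in S(\rn)$, which gives exactly the term $|\mathrm{det\,}\psi^{-1}_*|(k)\,\delta_{\psi^{-1}(k)}$ appearing in the middle expression of \eqref{3.45}. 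The second form, with $|\mathrm{det\,}\psi_*|^{-1}$ evaluated at the appropriate point, follows from the chain-rule identity $\mathrm{det\,}\psi^{-1}_*(k) = \bigl(\mathrm{det\,}\psi_*(\psi^{-1}(k))\bigr)^{-1}$; I would record this as a routine Jacobian computation, keeping in mind that \eqref{3.43} guarantees these factors are bounded above and below, so the coefficient sequence $\{|\mathrm{det\,}\psi^{-1}_*|(k)\}$ is bounded and the series on the right again converges in $S'(\rn)$.

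Finally I would assemble the pieces: applying $D_\psi$ termwise to both sides of \eqref{2.29} and invoking linearity together with the $S'(\rn)$-continuity of $D_\psi$ to pass from partial sums to the full series, one obtains \eqref{3.45}, with membership in $\Cc^{-n}(\rn)$ coming from the isomorphism property \eqref{3.44}. I do not expect a genuine obstacle here; the only point requiring a little care is the bookkeeping in the second equality of \eqref{3.45} — identifying $|\mathrm{det\,}\psi^{-1}_*|(k)$ with $|\mathrm{det\,}\psi_*|^{-1}$ at $\psi^{-1}(k)$ via the chain rule — and making explicit that the boundedness \eqref{3.43} of the Jacobian determinants is what legitimizes treating the transformed right-hand side as an admissible "generalized Dirac comb" in $\Cc^{-n}(\rn)$, in the spirit of the estimate \eqref{2.31}.
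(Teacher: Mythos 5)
Your proposal is correct and follows essentially the same route as the paper: apply $D_\psi$ to \eqref{2.29}, compute $\delta_k\circ\psi$ via \eqref{3.41} together with the chain rule for the Jacobian factor, and invoke the isomorphism property \eqref{3.44} for membership in $\Cc^{-n}(\rn)$. The extra remarks on $S'(\rn)$-continuity and the boundedness \eqref{3.43} of the coefficients are consistent with, and only slightly more explicit than, the paper's argument.
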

 
\begin{proof}
One has by \eqref{3.41} with $f= \delta_z$, $z\in \rn$, and well--known properties of Jacobian matrices  that
\begin{\eq}    \label{3.46}
\begin{aligned}
(\delta_z \circ \psi)(\vp) &= \big( \delta_z, \big|\mathrm{det \,} \psi_*^{-1} \big| \vp \circ \psi^{-1} \big) \\
&=  \big|\mathrm{det \,} \psi_*^{-1} \big| (z) \, \vp \big( \psi^{-1} (z) \big) \\
&=  \big|\mathrm{det \,} \psi_*^{-1} \big| (z) \, \delta_{\psi^{-1} (z)} (\vp) \\
&=  \big|\mathrm{det \,} \psi_* \big|^{-1} \big( \psi^{-1} (z) \big) \delta_{\psi^{-1} (z)} (\vp), \quad \vp\in S(\rn).
\end{aligned}
\end{\eq}
Then \eqref{3.45} follows from \eqref{3.44} applied to \eqref{2.29}.   
\end{proof}

The coefficients $\big|\mathrm{det \,} \psi_* \big|^{-1} \big( \psi^{-1} (k) \big)$ on the right--hand side of \eqref{3.45} are all
positive. One can remove this shortcoming (if it is any) at least partly by finite linear combinations of related identities. But more
interesting might be the application of a further significant property of the theory of function spaces, saying that $\Cc^{\vr} (\rn)$,
$\vr >n$, is a pointwise multiplier for the space $\Cc^{-n} (\rn)$, which means that there is a constant $c>0$ such that
\begin{\eq}   \label{3.47}
\| gf \, | \Cc^{-n} (\rn) \| \le c \, \|g \, |\Cc^{\vr} (\rn) \| \cdot \| f \, | \Cc^{-n} (\rn) \|
\end{\eq}
for all $g \in \Cc^{\vr} (\rn)$ and all $f\in \Cc^{-n} (\rn)$. This is a special case of \cite[Theorem 2.30, p.\,41]{T20}. There one
finds further explanations and references. Now one can combine this observation with Proposition \ref{P3.8}.

\begin{theorem}   \label{T3.9}
Let $\psi$ be the above diffeomorphic mapping of $\rn$ onto itself and let $g\in \Cc^{\vr} (\rn)$ with $\vr >n\in \nat$. Then
\begin{\eq}   \label{3.48}
g(x) \sum_{k \in \zn} e^{i2\pi k\psi (x)} 
= \sum_{k \in \zn} \frac{g}{\big|\mathrm{det \,} \psi_* \big|} \delta_{\psi^{-1} (k)} \in \Cc^{-n} (\rn),
\end{\eq}
convergence being in $S'(\rn)$.
\end{theorem}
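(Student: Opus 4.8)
The plan is to combine Proposition~\ref{P3.8} with the multiplier property \eqref{3.47} in the obvious way. First I would recall from Proposition~\ref{P3.8} that $f := \sum_{k\in\zn} e^{i2\pi k\psi(x)} = \sum_{k\in\zn} |\mathrm{det\,}\psi_*|^{-1}\delta_{\psi^{-1}(k)}$ holds in $S'(\rn)$, with $f \in \Cc^{-n}(\rn)$. Multiplying both sides by $g \in \Cc^\vr(\rn)$ with $\vr > n$, the bound \eqref{3.47} immediately gives $gf \in \Cc^{-n}(\rn)$, which is the membership claim. So the real content is to verify that the product $gf$, interpreted as the pointwise multiplication $\Cc^\vr(\rn) \times \Cc^{-n}(\rn) \to \Cc^{-n}(\rn)$, acts on each side of the identity in the expected way.

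For the left-hand side this is essentially immediate: $g$ is a $C^\infty$ function with all derivatives bounded (indeed $\Cc^\vr(\rn) \hra C(\rn)$ for $\vr > n > 0$ by \eqref{2.27}, and more is true), so it is a classical multiplier and the product $g(x) e^{i2\pi k\psi(x)}$ is just ordinary multiplication of functions; the series still converges in $S'(\rn)$ because multiplication by a fixed $C^\infty$ function with bounded derivatives is continuous on $S'(\rn)$, applied term by term to the convergent series \eqref{3.45}. For the right-hand side one uses that $g \delta_a = g(a)\delta_a$ for $a \in \rn$ (valid since $g$ is continuous, hence $g\delta_a$ makes sense as a regular-times-singular product), so $g \cdot |\mathrm{det\,}\psi_*|^{-1}\delta_{\psi^{-1}(k)} = \frac{g(\psi^{-1}(k))}{|\mathrm{det\,}\psi_*(\psi^{-1}(k))|}\,\delta_{\psi^{-1}(k)}$, which is exactly the $k$-th term written, more compactly, as $\frac{g}{|\mathrm{det\,}\psi_*|}\delta_{\psi^{-1}(k)}$. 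Again the series converges in $S'(\rn)$, either by continuity of multiplication by $g$ on $S'(\rn)$ or directly from the location property plus the Fatou property as in \eqref{2.31}, since the coefficients $g(\psi^{-1}(k))|\mathrm{det\,}\psi_*(\psi^{-1}(k))|^{-1}$ form a bounded set by \eqref{3.43} and the boundedness of $g$.

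The one point requiring a word of care — and the step I expect to be the main obstacle — is the compatibility of the two interpretations of "$gf$": the abstract paraproduct-type multiplication $\Cc^\vr \times \Cc^{-n} \to \Cc^{-n}$ supplied by \cite[Theorem~2.30]{T20} on the one hand, and the naive term-by-term products (classical function multiplication on the left, $g(a)\delta_a$ on the right) on the other. The cleanest way around this is to note that both candidate products agree with the genuine pointwise product when $f$ is replaced by a finite partial sum $f_N = \sum_{|k|\le N} |\mathrm{det\,}\psi_*|^{-1}\delta_{\psi^{-1}(k)}$: for such $f_N$ one has a finite linear combination of Dirac distributions, on which $g\delta_a = g(a)\delta_a$ is unambiguous, and simultaneously $f_N$ equals the finite trigonometric-type sum so that classical multiplication by $g$ on the left side is also unambiguous. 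Since the multiplication operator $f \mapsto gf$ from \cite[Theorem~2.30]{T20} is continuous on $\Cc^{-n}(\rn)$ and $f_N \to f$ there (Fatou property, as in Step~2 of the proof of Theorem~\ref{T2.1}), and since multiplication by the fixed $C^\infty$ function $g$ is also continuous on $S'(\rn)$, passing to the limit $N\to\infty$ gives \eqref{3.48} with convergence in $S'(\rn)$, completing the proof.
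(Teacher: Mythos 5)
Your overall route is the paper's own: Theorem \ref{T3.9} is obtained by multiplying the identity of Proposition \ref{P3.8} by $g$ and invoking the multiplier property \eqref{3.47}, and the paper records the proof as an immediate consequence of exactly these two facts. So the first part of your argument (membership in $\Cc^{-n}(\rn)$ and the coefficient evaluation $g\,\delta_a = g(a)\delta_a$ giving the right--hand side of \eqref{3.48}) is what is intended, and the paper offers no more detail than that.

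The extra compatibility argument you add, however, is not correct as written, on three counts. First, $g\in \Cc^{\vr}(\rn)$ with $\vr>n$ is only a bounded continuous function of finite (H\"older--Zygmund) smoothness; it is not a $C^\infty$ function with bounded derivatives, and multiplication by such a $g$ is neither defined nor continuous on all of $S'(\rn)$ --- the whole point of \eqref{3.47} is that the product $gf$ is only given (as a paramultiplication) for $f$ in spaces such as $\Cc^{-n}(\rn)$. So both places where you appeal to ``continuity of multiplication by $g$ on $S'(\rn)$'' are unfounded. Second, the finite partial sum $f_N=\sum_{|k|\le N}\big|\mathrm{det\,}\psi_*\big|^{-1}\delta_{\psi^{-1}(k)}$ does not equal any finite trigonometric--type sum; only the full series on the two sides of \eqref{3.45} coincide, so your claim that for $f_N$ both interpretations of the product are simultaneously classical does not make sense. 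Third, the Fatou property yields only that the $S'$--limit of the partial sums belongs to $\Cc^{-n}(\rn)$ with a uniform norm bound; it does not give $f_N\to f$ in the norm of $\Cc^{-n}(\rn)$, and in fact norm convergence fails here, since by \eqref{2.31} the tail $\sum_{|k|>N}$ has $\Cc^{-n}$--norm comparable to the supremum of the remaining coefficients, which does not tend to zero. Consequently the limit you wish to take cannot be justified by norm continuity of $h\mapsto gh$ on $\Cc^{-n}(\rn)$. If you want to substantiate the term--by--term evaluation inside the series, you need a genuinely local device instead, for instance the localization property behind \eqref{2.31} combined with the fact that $(g-g(a))\,\delta_a=0$ for $g$ of positive H\"older smoothness, rather than $S'$--continuity of multiplication by $g$ or norm convergence of partial sums.
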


\begin{proof}
This is the immediate consequence of Proposition \ref{P3.8} and \eqref{3.47}.
\end{proof}

\begin{remark}   \label{R3.10}
The above considerations, the quoted papers and the literature within suggest to have a closer look at the one--dimensional case $n=1$.
Let $y= \psi (x)$, $x \in \real$, be a smooth monotonically increasing function with $0<c_1 \le \psi' (x) \le c_2 <\infty$ and let $g\in
\Cc^{\vr} (\real)$ with $\vr >1$. Then it follows from \eqref{3.48} that
\begin{\eq}   \label{3.49}
g(x) \sum_{k \in \ganz} e^{i2 \pi k \psi(x)} = \sum_{k\in \ganz} \frac{g}{\psi'} \, \delta_{\psi^{-1} (k)}.
\end{\eq}
\end{remark}

Now one can proceed in $\rn$ as above replacing \eqref{2.29} by \eqref{3.48} with the following outcome.

\begin{proposition}    \label{P3.11}
Let $\tau \in \Phi (\rn)$ according to \eqref{3.31}. Let $\psi$ be the above isomorphic mapping of $\rn$ onto itself and let $g\in
\Cc^{\vr} (\rn)$ with $\vr >n$. Then
\begin{\eq}  \label{3.50}
\sum_{k \in \zn} \Big( \frac{\tau g}{\big|\mathrm{det \,} \psi_* \big|}\Big) \big(\psi^{-1} (k)\big) e^{-ix \psi^{-1}(k)} =
(2\pi)^{n/2} \sum_{k\in \zn} \Big( (\tau g)(\cdot) e^{i2\pi k\psi(\cdot)} \Big)^\wedge,
\end{\eq}
convergence being in $S'(\rn)$.
\end{proposition}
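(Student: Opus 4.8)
The plan is to reduce Proposition \ref{P3.11} to Theorem \ref{T3.9} by applying the Fourier operator $F_\tau$ of Proposition \ref{P3.6} to the identity \eqref{3.48}, exactly as Proposition \ref{P3.6} was obtained from \eqref{2.29}. First I would fix the admissible datum $f = g(\cdot)\sum_{k\in\zn} e^{i2\pi k\psi(\cdot)}$, which by Theorem \ref{T3.9} lies in $\Cc^{-n}(\rn)\hra S'(\rn)$, and note by \eqref{3.48} that $f$ also equals $\sum_{k\in\zn}\frac{g}{|\mathrm{det}\,\psi_*|}\,\delta_{\psi^{-1}(k)}$, both series converging in $S'(\rn)$. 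Since $F_\tau\colon S'(\rn)\hra S'(\rn)$ by \eqref{3.30} (or via the decomposition \eqref{3.28}, $F_\tau = T_\tau\circ F$), one may apply $F_\tau$ to each of the two representations of $f$ and obtain the same element of $S'(\rn)$.

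The next step is to compute $F_\tau$ on the two sides. Starting from the Dirac-comb side $\sum_k \frac{g}{|\mathrm{det}\,\psi_*|}\,\delta_{\psi^{-1}(k)}$, write $c_k = \bigl(\frac{g}{|\mathrm{det}\,\psi_*|}\bigr)(\psi^{-1}(k))$, so this is $\sum_k c_k\,\delta_{\psi^{-1}(k)}$. Using $\wh{\delta_a} = (2\pi)^{-n/2}e^{-ia\xi}$ from \eqref{2.28} and inserting into \eqref{3.25}, exactly as in \eqref{3.32}, one gets
\begin{\eq}  \label{P311-a}
F_\tau\Big(\sum_{k\in\zn} c_k\,\delta_{\psi^{-1}(k)}\Big) = \sum_{k\in\zn} c_k\,\tau\big(\psi^{-1}(k)\big)\,e^{-ix\psi^{-1}(k)},
\end{\eq}
which is the left--hand side of \eqref{3.50}. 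For the other side, apply $F_\tau$ to $g(\cdot)\sum_k e^{i2\pi k\psi(\cdot)}$; since $F_\tau f = (2\pi)^{n/2}\bigl(\tau(\xi)\wh{f}(\xi)\bigr)^\vee$ after the substitution $f\mapsto \wh f$, and here the relevant input into \eqref{3.25} is the function $\xi\mapsto (\tau g)(\xi)e^{i2\pi k\psi(\xi)}$ summed over $k$, one obtains termwise $(2\pi)^{n/2}\bigl((\tau g)(\cdot)e^{i2\pi k\psi(\cdot)}\bigr)^\wedge$, i.e. the right--hand side of \eqref{3.50}. Equating the two images of $f$ under $F_\tau$ yields \eqref{3.50}, with convergence in $S'(\rn)$ inherited from that of the two series for $f$ together with the continuity of $F_\tau$ on $S'(\rn)$.

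The one genuinely delicate point is bookkeeping: $F_\tau$ in \eqref{3.25}--\eqref{3.27} carries $f^\vee$ (or $f$) under the integral with a specific normalization, so one must be careful that the factor $(2\pi)^{n/2}$ and the placement of $\wh{}$ versus ${}^\vee$ come out as written on the two sides of \eqref{3.50} — this is the same normalization match already carried out in \eqref{3.32}--\eqref{3.33}, and I would simply mirror that computation. A secondary point is that $g\in\Cc^\vr(\rn)$ with $\vr>n$ is needed only to guarantee, via the pointwise-multiplier property \eqref{3.47}, that $f=g\sum_k e^{i2\pi k\psi}\in\Cc^{-n}(\rn)$ and hence lies in $S'(\rn)$ with a convergent defining series; once that is in hand the mapping properties \eqref{3.30} do all the remaining work, and there is no further obstacle.

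\begin{proof}
By Theorem \ref{T3.9} the series $f = g(\cdot)\sum_{k\in\zn} e^{i2\pi k\psi(\cdot)}$ converges in $S'(\rn)$ to an element of $\Cc^{-n}(\rn)$, and by \eqref{3.48} one has simultaneously $f = \sum_{k\in\zn}\frac{g}{|\mathrm{det}\,\psi_*|}\,\delta_{\psi^{-1}(k)}$, again with convergence in $S'(\rn)$. Since $F_\tau\colon S'(\rn)\hra S'(\rn)$ by \eqref{3.30}, applying $F_\tau$ to both representations gives one and the same distribution. Writing $c_k = \bigl(\tfrac{g}{|\mathrm{det}\,\psi_*|}\bigr)(\psi^{-1}(k))$ and inserting $\sum_k c_k\,\delta_{\psi^{-1}(k)}$ into \eqref{3.25}, the computation \eqref{3.32} together with \eqref{2.28} yields $F_\tau f = \sum_{k\in\zn} c_k\,\tau(\psi^{-1}(k))\,e^{-ix\psi^{-1}(k)}$, the left--hand side of \eqref{3.50}. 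Inserting instead $g(\cdot)\sum_k e^{i2\pi k\psi(\cdot)}$ and arguing as in \eqref{3.33}, one obtains $F_\tau f = (2\pi)^{n/2}\sum_{k\in\zn}\bigl((\tau g)(\cdot)e^{i2\pi k\psi(\cdot)}\bigr)^\wedge$, the right--hand side of \eqref{3.50}. Equating the two expressions proves \eqref{3.50}, with convergence in $S'(\rn)$ following from that of the two series for $f$ and the continuity of $F_\tau$ on $S'(\rn)$.
\end{proof}
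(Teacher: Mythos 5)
Your proposal is correct and follows essentially the same route as the paper: insert both representations of $f$ from \eqref{3.48} into $F_\tau$ and compute termwise as in \eqref{3.32}, \eqref{3.33}, which reproduces the paper's \eqref{3.51} and \eqref{3.52}. Your additional remarks on the continuity of $F_\tau$ on $S'(\rn)$ and on the role of $g\in\Cc^{\vr}(\rn)$ via \eqref{3.47} only make explicit what the paper leaves implicit.
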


\begin{proof}
One can argue as in the proof of Proposition \ref{P3.6} inserting separately both sides of \eqref{3.48} into $F_\tau$ according to
\eqref{3.25} with $\tau \in \Phi (\rn)$. Then \eqref{3.50} follows from
\begin{\eq}   \label{3.51}
F_\tau f = \sum_{k \in \zn} \Big( \frac{\tau g}{\big|\mathrm{det \,} \psi_* \big|}\Big) \big(\psi^{-1} (k)\big) e^{-ix \psi^{-1}(k)}
\end{\eq}
for $f= \sum_{k\in \zn} \frac{g}{|\mathrm{det \,} \psi_* |} \delta_{\psi{-1} (k)}$ and
\begin{\eq}  \label{3.52}
F_\tau f = (2\pi)^{n/2} \sum_{k\in \zn} \Big( (\tau g)(\cdot) e^{i2\pi k\psi(\cdot)} \Big)^\wedge,
\end{\eq}
for $f= g(x) \sum_{k \in \zn} e^{i2 \pi k \psi(x)}$.
\end{proof}

\begin{remark}   \label{R3.12}
For $\psi(\xi) = \xi$ and $g=1$ in \eqref{3.50} one obtains \eqref{3.31a}. One has now the Fourier transform of the modulated function
$\tau g$ instead of $\wh{\tau} (x- 2\pi k)$. This simplifies if $\psi$ is the affine mapping as discussed in connection with 
\eqref{3.37}.
\end{remark}

\section{Complements}   \label{S4}
\subsection{Poisson summation formula, limiting assertions}   \label{S4.1}
For $\vp \in S(\rn)$ the convergence of the classical Poisson summation formula \eqref{1.1} and of its extended version in 
\eqref{2.19} are obvious. Greater care is needed in connection with \eqref{1.3} and Theorem \ref{T2.1}. Convergence must now be 
understood in the framework of the dual pairing  $\big(S(\rn), S'(\rn) \big)$. Similarly in the applications as described above. One
may always ask afterwards whether the related series converge already in some distinguished function spaces or even pointwise. In
some cases this is quite obvious, but we did not stress this point above. On the other hand it is of interest to extend \eqref{1.1}
and \eqref{2.19} to further functions $f$ in place of $\vp \in S(\rn)$ and to clarify how the related convergence must be understood.
This has been done in the classical context in \cite[Theorem 3.1.17, p.~167]{Gra04} assuming that both $f$ and $\wh{f}$ belong to
$L_1 (\rn)$ and
\begin{\eq}   \label{4.0}
|f(x)| + |\wh{f} (x)| \le c (1+|x|)^{-n-\delta}, \qquad x\in \rn,
\end{\eq}
for some $c>0$ and $\delta >0$. This covers in particular the arguments in Section \ref{S3.3} for the Cauchy--Poisson semi--group.
We ask for natural function spaces such that \eqref{1.1} and \eqref{2.19} make sense for its elements. For this purpose we recall the
duality
\begin{\eq}   \label{4.1}
B^n_1 (\rn)' = \Cc^{-n} (\rn)
\end{\eq}
according to \cite[Theorem 2.11.2, p.~178]{T83} where the Besov space $B^n_1 (\rn) = B^n_{1,1} (\rn)$ can be characterize as the
collection of all $f\in L_1(\rn)$ such that for $M \in \nat$ with $M >n$,
\begin{\eq}   \label{4.2}
\| f \, |B^n_1 (\rn) \|_M = \|f | L_1 (\rn)\| + \int_{x\in \rn, h\in \rn, |h| \le 1} \big| \Delta^M_h f(x) \big| 
\frac{\di x \, \di h}{|h|^{2n}} <\infty,
\end{\eq}
(equivalent norms), where $\Delta^m_h f$ are the usual differences in \rn, \cite[Corollary 2.6.1/1, p.~142]{T92}. One has for the dual
pairing of $g\in \Cc^{-n} (\rn)$ and $f\in B^n_1 (\rn)$ that
\begin{\eq}   \label{4.3}
|(g,f)| \le \|g \, |\Cc^{-n} (\rn) \| \cdot \|f \, | B^n_1 (\rn) \|.
\end{\eq}
Using that $S(\rn)$ is dense in $B^n_1 (\rn)$ it follows from \eqref{4.3} applied to $g\in \Cc^{-n}(\rn)$ and $f_j \to f$ in $B^n_1
(\rn)$ for any approximating sequence $\{f_j \} \subset S(\rn)$ that
\begin{\eq}   \label{4.4}
(g, f_j ) \to (g,f) \quad \text{if $j \to \infty$ in $\comp$}.
\end{\eq}
Recall that
\begin{\eq}  \label{4.5}
B^n_1 (\rn) \hra C(\rn)
\end{\eq}
where $C(\rn)$ is again the space of all complex bounded continuous functions in \rn, \cite[Theorem 2.3, p.~22]{T20} and the references
given there. Then it follows from \eqref{4.4} with $g = \sum_{k\in \zn} \delta_k \in \Cc^{-n} (\rn)$ that
\begin{\eq}    \label{4.6}
\Big( \sum_{k \in \zn} \delta_k, f \Big) = \lim_{j \to \infty} \Big(\sum_{k\in \zn} \delta_k, f_j \Big) = \lim_{j \to \infty}
\sum_{k\in \zn} f_j (k).
\end{\eq}
The embedding \eqref{4.5} justifies to re--write  the left--hand side of \eqref{4.6} as $\sum_{k\in \zn} f(k)$. Similarly for $g =
\sum_{k\in \zn} e^{i2\pi k \xi} \in \Cc^{-n} (\rn)$. We fix the outcome, extending \eqref{2.19} from $\vp \in S(\rn)$ to $f\in B^n_1
(\rn)$  where convergence must always be understood as just explained.

\begin{proposition}   \label{P4.1}
Let $f\in B^n_1 (\rn)$. Then
\begin{\eq}   \label{4.7}
\sum_{k\in \zn} f(x+ 2\pi k) = (2 \pi)^{-n/2} \sum_{k\in \zn} \wh{f}(k) \, e^{ikx}, \qquad x\in \rn.
\end{\eq}
\end{proposition}

\begin{proof} The identity \eqref{4.7} follows for $x=0$ from \eqref{1.1} and the above explanations with $\delta_{2\pi k}$ in place of
$\delta_k$ and $e^{ik \xi}$ in place of $e^{i2\pi k \xi}$. Replacing there $f(\cdot)$ by
$f(x+ \cdot)$, $x\in \rn$, one obtains \eqref{4.7}.
\end{proof}

\begin{remark}   \label{R4.2}
One can use \eqref{4.7} to justify Proposition \ref{P3.4} more directly. Let
\begin{\eq}  \label{4.8}
f(x) = (2\pi)^{n/2} c_n \frac{t}{(|x|^2 + t^2)^{\frac{n+1}{2}}}, \qquad t>0, \quad x\in \rn,
\end{\eq}
be the right--hand side of\eqref{3.18}. Then $f\in B^n_1 (\rn)$ follows from $D^\alpha f \in L_1(\rn)$, $\alpha \in \nat^n_0$, and
elementary embeddings. One has by \eqref{3.18} that $\wh{f} (\xi) = e^{-t |\xi|}$. Inserted in \eqref{4.7} with $x=0$ one obtains
\begin{\eq}   \label{4.9}
(2\pi)^n c_n  t \sum_{k\in \zn} \frac{1}{(|2\pi k|^2) + t^2)^{\frac{n+1}{2}}} = \sum_{k\in \zn} e^{-t |k|}.
\end{\eq}
Then \eqref{3.19} follows for $x=0$ from \eqref{4.9} with $2\pi t$ in place of $t$. The extension from $x=0$ to $x\in \rn$ can be
done as in the proof of Proposition \ref{P4.1}.
\end{remark}

\subsection{Some relations}   \label{S4.2}
Our arguments rely on limiting assertions, both for specific  embeddings and also for mappings of the Fourier transform. They are part
of a larger landscape which might be fixed also for further investigations. In particular it shows that Proposition \ref{P4.1} is
also a natural limiting assertions. 

\subsubsection{Some limiting embeddings}  \label{S4.2.1}
Let $C(\rn)$ be again  the space of all complex bounded continuous functions in \rn. Let $0<q \le \infty$. Then
\begin{\eq}   \label{4.10}
\id: \quad B^0_{\infty,q} (\rn) \hra L_\infty (\rn) \qquad \text{if, and only if, $0<q \le 1$},
\end{\eq}
\begin{\eq}    \label{4.11}
\id: \quad B^0_{\infty,q} (\rn) \hra  C(\rn) \qquad \text{if, and only if, $0<q \le 1$},
\end{\eq}
is covered by \cite[Theorem 2.3, p.~22]{T20}, whereas one has by \cite[Proposition 2.43, p.~46]{T20} that
\begin{\eq}    \label{4.12}
\id: \quad L_\infty (\rn)\hra B^0_{\infty,q} (\rn) \qquad \text{if, and only if, $q=\infty$}.
\end{\eq}
This can be complemented by
\begin{\eq}    \label{4.13}
\id: \quad C(\rn) \hra B^0_{\infty,q} (\rn) \qquad \text{if, and only if, $q=\infty$}
\end{\eq}
as already mentioned in \cite[Remark 3.11, p.~112]{SiT95}. But it can be proved rather quickly as follows. Let $C(\rn)$ be continuously
embedded in $B^0_{\infty,q} (\rn)$ for some $q$, $0<q \le \infty$. The characteristic function $\chi_Q$ of a cube $Q$ in $\rn$ can be
approximated in, say, $L_2 (\rn)$, and hence in $S'(\rn)$, by suitable continuous functions. Then it follows from the Fatou property of
$B^0_{\infty,q} (\rn)$ according to \cite[Theorem 1.25, p.~18]{T20} (and the references given there) that $\chi_Q \in B^0_{\infty,q}
(\rn)$. But this requires by \cite[Proposition 2.43, p.~46]{T20} that $q=\infty$. These assertions can be complemented by
\begin{\eq}   \label{4.14}
B^0_{1,u} (\rn) \hra L_1 (\rn) \hra B^0_{1,v} (\rn) \ \text{if, and only if, both $0<u \le 1$ and $v=\infty$}.
\end{\eq}
The if--part follows from the Fourier--analytic definition of $B^0_{1,q} (\rn)$. The only --if--part can be obtained by duality from
\eqref{4.10} and \eqref{4.12}. These properties are more or less known and already covered by \cite{SiT95} and may also be found in
\cite[Section 2.3.3, pp.~43--44]{ET96}. This applies also to the embedding
\begin{\eq}   \label{4l.15}
\id: \quad B^n_1 (\rn) \hra B^0_{\infty,1} (\rn) \hra C(\rn),
\end{\eq}
\cite[Theorem 2.5, p.~25]{T20}.

\subsubsection{Some mappings}   \label{S4.2.2}
We illuminate the above considerations  by collecting some mapping properties of the Fourier transform $F$ without further discussions.
First we mention that
\begin{\eq}   \label{4.15}
F: \quad B^{n+s_1}_1(\rn) \hra B^{s_2}_1 (\rn) \quad \text{if, and only if, both $s_1 \ge 0$ and $s_2 \le 0$}.
\end{\eq}
This is a special case of the related assertion  in \cite{HST23}. Then it follows from \eqref{4.14} that
\begin{\eq}   \label{4.16}
F: \quad B^n_1 (\rn) \os{F}{\hra} B^0_1 (\rn) \os{\id}{\hra} L_1 (\rn)
\end{\eq}
and by duality according to \cite[Theorem 2.11.2, p.~178]{T83} that
\begin{\eq}   \label{4.17}
F: \quad \Cc^0 (\rn) \hra \Cc^{-n}(\rn).
\end{\eq}
This is one of the cornerstones in \cite{HST23}. The mappings \eqref{4.15} and \eqref{4.16} complement \eqref{4.7} and the discussion
how the related convergence must be understood. Finally we mention the special case
\begin{\eq}    \label{4.18}
F: \quad B^n_1 (\rn) \hra \Cc^0 (\rn, w_n)
\end{\eq}
according to \cite[Theorem 2.1]{Tri23}, where $w_n$ is the same weight as in \eqref{3.1} (with $\alpha =n$) and $\Cc^0 (\rn, w_n)$ is
normed by $\| w_n f | \Cc^0 (\rn) \|$. Since  $S(\rn)$ is dense in $B^n_1 (\rn)$ one can improve \eqref{4.18} by
\begin{\eq}   \label{4.19}
F: \quad B^n_1 (\rn) \hra \os{\circ}{\Cc}{}^0 (\rn, w_n),
\end{\eq}
the completion of $S(\rn)$ in $\Cc^0(\rn, w_n)$. Then it follows from the duality in \cite[Theorem 2.11.2, Remark 2.11.2/2, 
pp.~178/180]{T83} that also
\begin{\eq}   \label{4.20}
F: \quad B^0_1 (\rn, w_{-n}) \hra \Cc^{-n} (\rn)
\end{\eq}
with $B^0_1 (\rn, w_{-n} )$ normed by $\|w_{-n} f \, | B^0_1 (\rn) \|$.

\end{document}